\documentclass[11pt,a4paper,reqno]{amsart}            

\usepackage[english]{babel}
\usepackage{amssymb,amsmath,amsthm}
\usepackage{mathrsfs}
\usepackage{enumitem}
\usepackage[immediate,debrief]{silence}
\usepackage[pagebackref]{hyperref}
\usepackage{bbm}
\usepackage{csquotes}

\usepackage{color}
\usepackage{comment}
\usepackage{scrtime}

\hypersetup{
 colorlinks,
 linkcolor={blue!90!black},
 citecolor={red!80!black},
 urlcolor={blue!50!black}
}
\usepackage{tikz}
\usepackage{tikz-cd}
\usepackage{graphicx}
\usetikzlibrary{shapes.geometric,arrows,fit,matrix,positioning,trees,snakes}
\tikzset
{
    treenode/.style = {circle, draw=black, align=center, minimum size=1cm},
    subtree/.style  = {isosceles triangle, draw=black, align=center, minimum height=0.5cm, minimum width=1cm, shape border rotate=90, anchor=north}
}

\renewcommand\mathcal{\mathscr}
\theoremstyle{plain}
\newtheorem{theorem}{Theorem}[section]
\newtheorem*{theorem*}{Theorem}
\newtheorem{lemma}[theorem]{Lemma}
\newtheorem*{lemma*}{Lemma}

\newtheorem{proposition}[theorem]{Proposition}

\newtheorem*{conjecture*}{Conjecture}

\newtheorem*{notation*}{Notation}

\theoremstyle{remark}
\newtheorem{remark}[theorem]{Remark}
\newtheorem*{remark*}{Remark}

\theoremstyle{definition}
\newtheorem{definition}[theorem]{Definition}
\newtheorem*{definition*}{Definition}

\theoremstyle{example}

\newtheorem*{example*}{Example}

\numberwithin{equation}{section}

\newcount\quantno
\everydisplay{\quantno=0}\everycr{\quantno=0}
\newcommand\quant{\advance\quantno by1
                      \ifnum\quantno=1\qquad\else\quad\fi\forall }
\newcommand\itemno[1]{(\romannumeral #1)}

\renewcommand\Re{\operatorname{\mathrm{Re}}}
\renewcommand\Im{\operatorname{\mathrm{Im}}}

\newcommand\rest[1]{\kern-.1em
          \lower.5ex\hbox{$\scriptstyle #1$}\kern.05em}

\renewcommand\mod[1]{\vert{#1}\vert}
\newcommand\bigmod[1]{\bigl\vert{#1}\bigr|}
\newcommand\Bigmod[1]{\Bigl\vert{#1}\Bigr|}

\newcommand\norm[2]{{\Vert{#1}\Vert_{#2}}}
\newcommand\normto[3]{{\Vert{#1}\Vert_{#2}^{#3}}}
\newcommand\bignorm[2]{\left.{\bigl\Vert{#1}\bigr\Vert_{#2}}\right.}
\newcommand\bignormto[3]{\left.{\bigl\Vert{#1}\bigr\Vert_{#2}^{#3}}\right.}
\newcommand\Bignorm[2]{\left.{\Bigl\Vert{#1}\Bigr\Vert_{#2}}\right.}

\newcommand\prodo[2]{\left\langle#1,#2\right\rangle}

\newcommand\wrt{\,\text{\rm d}}

\newcommand\bD{\mathbf{D}}

\newcommand\BH{\mathbb{H}}

\newcommand\BR{\mathbb{R}}

\newcommand\BZ{\mathbb{Z}}

\newcommand\cA{\mathcal{A}}   
\newcommand\fra{\mathfrak{a}}  
\newcommand\cB{\mathcal{B}}  \newcommand\fB{\mathfrak{B}}  \newcommand\fb{\mathfrak{b}}  
\newcommand\frb{\mathfrak{b}} 
\newcommand\cC{\mathcal{C}}  \newcommand\fC{\mathfrak{C}}

  \newcommand\fF{\mathfrak{F}} 
 
\newcommand\cG{\mathcal{G}}  \newcommand\fG{\mathfrak{G}} 
 
\newcommand\cH{\mathcal{H}}

\newcommand\cM{\mathcal{M}}     
\newcommand\cN{\mathcal{N}}   \newcommand\frn{\mathfrak{n}}

  \newcommand\fQ{\mathfrak{Q}}   
\newcommand\cR{\mathcal{R}}    
   \newcommand\frs{\mathfrak{s}} 
\newcommand\cT{\mathcal{T}}  \newcommand\fT{\mathfrak{T}}  
\newcommand\cU{\mathcal{U}}    
   \newcommand\frv{\mathfrak{v}}

   \newcommand\frz{\mathfrak{z}}

\newcommand\al{\alpha}
\newcommand\be{\beta}
\newcommand\ga{\gamma}    
\newcommand\de{\delta}

\newcommand\la{\lambda}   
\newcommand\om{\omega}    \newcommand\Om{\Omega}  
    
\newcommand\te{\theta}

\newcommand\funnyk{k\hbox to 0pt{\hss\phantom{g}}}

\newcommand\lu[1]{L^1(#1)}
\newcommand\lp[1]{L^p(#1)}

\newcommand\ly[1]{L^\infty(#1)}

\newcommand\whH{\widehat{\phantom{G}}\hbox to 0pt{\hss $H$}}

\newcommand\emspace{\hbox to 6pt{\hss}}
\newcommand\ds{\displaystyle}

\newcommand\rmi{\hbox{\rm (i)}}
\newcommand\rmii{\hbox{\rm (ii)}}
\newcommand\rmiii{\hbox{\rm (iii)}}

\newcommand\ioty{\int_0^{\infty}}

\newcommand\One{{\mathbbm{1}}}

\newcommand\e{\mathrm{e}}

\newcommand\arcsinh{\mathrm{arsinh}}

\newcommand\ucT{\underline{\cT}}
\newcommand\ocT{\overline{\cT}}
\newcommand\matrice[4]{\left[\begin{matrix}#1 &#2 \\#3 &#4\end{matrix}\right]}

\DeclareSymbolFont{EUEX}{U}{euex}{m}{n}

\DeclareSymbolFont{euexlargesymbols}{U}{euex}{m}{n}
\DeclareMathSymbol{\intop}{\mathop}{euexlargesymbols}{"52}
     \def\int{\intop\nolimits}

\DeclareSymbolFont{euexsymbols}     {U}{euex}{m}{n}
\DeclareMathSymbol{\smallint}{\mathop}{euexsymbols}{"52}

\begin{document}

\title[]{Uncentred maximal operators with respect to half balls on Damek--Ricci spaces}


\keywords{Uncentred Hardy–Littlewood maximal functions, balls, half balls, Damek--Ricci spaces, hyperbolic spaces}

\thanks{The first and fourth authors are members of the INdAM group GNAMPA and
are partially supported by the grant \enquote{INdAM-GNAMPA Project}, CUP
E53C25002010001, \enquote{Transferring Harmonic Analysis between Discrete
Structures and Manifolds}. The first author was partially supported by
the research grant \enquote{Yields of the ubiquity and the geometry of inner
functions (YoungInFun)}, PID2024-160326NA-I00. The third author is supported by the Deutsche Forschungsgemeinschaft
(DFG, German Research Foundation)–SFB-Gesch{\"a}ftszeichen –Projektnummer SFB-TRR
358/1 2023 –491392403.
Part of this paper was done while the second author was visiting Universit{\"a}t Paderborn and while the third author was visiting Università di Milano-Bicocca. Both travelers would like to thank the host institutions for the hospitality.}

\author[]{Nikolaos Chalmoukis, Stefano Meda,
Effie Papageorgiou \\ and Federico Santagati}

\address[Nikolaos Chalmoukis]{Dipartimento di Matematica e Applicazioni \\ Universit\`a di Milano-Bicocca\\
via R.~Cozzi 53\\ I-20125 Milano\\ Italy}
\email{nikolaos.chalmoukis@unimib.it}

\address[Stefano Meda]{Dipartimento di Matematica e Applicazioni \\ Universit\`a di Milano-Bicocca\\
via R.~Cozzi 53\\ I-20125 Milano\\ Italy}
\email{stefano.meda@unimib.it}

\address[Effie Papageorgiou]{Institut f\"ur Mathematik \\ Universit\"at Paderborn\\
D-33098 Paderborn \\ Germany}
\email{papageoeffie@gmail.com}

\address[Federico Santagati]{Dipartimento di Scienze Matematiche ``Giuseppe Luigi Lagrange" \\ Politecnico di Torino\\
C.so Duca degli Abruzzi 24 \\ 10129 Torino\\ Italy}
\email{federico.santagati@polito.it}

\begin{abstract}
	In this paper we study a variant of the uncentred Hardy--Littlewood maximal operator on Damek--Ricci spaces in which
	balls are replaced by suitable half balls. Perhaps surprisingly, such modified maximal operator has better 
	boundedness properties than the classical one.  {In particular, it is bounded on $L^p$ 
	for every $p$ in $(1,\infty]$ (whereas the analogue operator on balls is bounded on $L^p$ only for $p>2$),
	and satisfies a limiting distributional inequality if $f$ is in $L\log (2+L)$}.
	{This endpoint estimate is optimal in the sense that it does not hold if $L\log ({2}+L)$ 
	is replaced by a larger (in a suitable sense) Orlicz space}.
\end{abstract}


\maketitle

\section{Introduction} \label{s: Introduction}
Given a metric measure space $(X,d,\mu)$, a family $\fF$ of subsets of $X$ of positive measure, a locally integrable 
function $f$ on $X$ and a point $x$ in $X$, define the \textit{uncentred Hardy--Littlewood maximal operator} associated to $\fF$ by
\begin{equation} \label{f: HLMO variants}
\cN^\fF\! f(x)
	:= \sup_{F\ni x\atop F\in\fF} \, \frac{1}{\mod{F}} \, \int_F \mod{f} \wrt \mu.
\end{equation}
The boundedness properties of $\cN^\fF\!$ between $L^p$, Lorentz and Orlicz spaces and for a wide variety of metric measure
spaces $X$ and families $\fF$ have been investigated in several papers.

In this paper we suppose that $S$ is a Damek--Ricci space endowed with a left-invariant metric Riemannian $d$ 
and Riemannian measure $\mu$ and analyse the boundedness properties of $\cN^\fF$ for various families $\fF$ of subsets of $S$. 
We recall that the metric measure space $(S,d,\mu)$ has exponential volume growth, and it is locally, but not globally, doubling.
For any measurable subset $E$ of $S$ we shall often write~$\mod{E}$ instead of $\mu(E)$. 

Recall that the Damek--Ricci spaces include all the symmetric spaces of the noncompact type and real rank one, and, in particular,
the real hyperbolic spaces.  The paradigmatic example of the latter is the hyperbolic upper half plane, which will play
an important role in this paper.  For more on the structure of Damek--Ricci spaces and their relation with noncompact
symmetric spaces, see \cite{DR, CDKR1, CDKR2, R}.  We emphasise the fact that most Damek--Ricci spaces are not symmetric.  

Denote by $\fB$ the family of all open balls in $S$, and consider, for each locally integrable function $f$ on $S$, its 
classical \textit{uncentred} Hardy--Littlewood (HL) maximal function $\cN^\fB\! f$. 
A well known result by A.~Ionescu \cite{I1} states that if~$S$ is a symmetric space of the 
noncompact type and real rank one, then $\cN^\fB$ is of restricted weak type $(2,2)$, and, interpolating with the trivial bound 
on $\ly{S}$, also bounded on $\lp{S}$ for $p>2$.  He also proved that on any higher rank symmetric space $\cN^\fB$ is bounded on 
$\lp{S}$ if and only if $p>2$ \cite{I2}. For an extension of the rank one result \cite{I1} to homogeneous trees, see \cite{V}.  
Extensions of Ionescu's results to Riemannian manifolds with exponential volume growth have been proved by various authors. 
See, in particular, \cite{L2, L3}, where certain cuspidal manifolds are considered, and \cite{MPSV}.  

It is worth mentioning that the \textit{centred} Hardy--Littlewood maximal operator $\cM$, defined much as $\cN^\fB$, 
but where the supremum is taken over all balls centred at $x$, has also been the object of several studies. After the landmark 
results of Str\"omberg \cite{Str}, who proved the weak-type $(1,1)$ estimate for $\cM$ on all symmetric spaces 
of the noncompact type, quite a few authors proved interesting results concerning $\cM$ on various metric measure spaces
with exponential volume growth. These include \cite[Corollary 3.22]{ADY}, where Str\"omberg's result is extended 
to Damek–Ricci spaces, \cite{Str}, where an interesting Cartan--Hadamard surface is considered,
\cite{L1,L2,L3} where certain cuspidal manifolds are studied, \cite{CMS} where the quite general
case of Gromov hyperbolic spaces satisfying certain exponential volume growth estimates is treated, and \cite{MPSV}, where
the invariance of the $L^p$ mapping properties of $\cM$ with respect to rough isometries is proved.

For results in the case of trees, which motivated further analysis in the continuous case, see \cite{LMSV, LS, MS, CMS1, NT}.  

The purpose of this paper is to consider a variant $\cN^\frb$ of $\cN^\fB$ in which balls are replaced by certain half balls, 
and to show that, perhaps surprisingly, $\cN^\frb$ has better boundedness properties than $\cN^\fB$.  This work is inspired by 
\cite{MS}, where the case of (possibly nonhomogeneous) locally finite trees with exponential volume growth is considered.  
However, their extension to Damek--Ricci spaces requires new ideas and is technically much more involved.  It would be 
interesting to extend our result to Cartan--Hadamard manifolds, but this will require different methods.  

{We observe that this phenomenon does not occur on {$\BR^n$, endowed with the Euclidean distance and
any doubling measure}, where $\cN^{\fB}$ and
$\cN^{\fb}$ are pointwise equivalent, hence share the same $L^p$ boundedness properties.}

It is perhaps convenient to first describe $\cN^\frb$ in the simplest possible case, 
viz. the half plane $\big\{(x,y) \in \BR^2: y>0 \big\}$, endowed with the {
Riemannian metric 
\begin{equation} \label{f: hyp metric}
	\wrt s^2
	:= \frac{\wrt x^2 + \wrt y^2}{y^2}
\end{equation}
and measure 
$$
	\wrt \mu(x,y) = \frac{\wrt x\wrt y}{y^2}.
$$
}
Horizontal lines in the upper half plane are the horocycles with respect to the point at infinity in the direction of the $y$-axis.  
Given a point $z$ in $\BH$ we consider the unique geodesic tangent to the horocycle through $z$.  This geodesic splits
the ball $B_R(z)$ (centred at $z$ and with radius $R$) into two parts, the lower of which we call $b_R(z)$. The family
of such half balls will be denoted by $\frb$.  Since $\BH$ is symmetric, the measure of $b_R(z)$ is just one half the measure of 
$B_R(z)$.  

These sets and the family $\frb$ have natural generalisations on any Damek--Ricci space:
see Section~\ref{s: Damek--Ricci spaces} for the details.  The 
uncentred maximal operator on $S$ associated to the family $\frb$ is then defined as in \eqref{f: HLMO variants}, with
$\frb$ in place of~$\fF$.  
Since for every $x$ in $S$ and every positive radius $R$ the measure of $b_R(x)$ is one half of the measure of $B_R(x)$, 
a straightforward argument yields the pointwise estimate
$$
\cN^\frb\! f \leq 2 \, \cN^\fB\! f,
$$
which, by (a straightforward generalisation of) Ionescu's result, implies that~$\cN^\frb$ is bounded on 
$\lp{\BH}$ for all $p>2$ and it is of restricted weak type $(2,2)$.

The main result of the paper, which is novel also for the hyperbolic upper half plane, is the following, {where 
$\Phi(t) := t \, \log(2+t)$.}

\begin{theorem} \label{t: main}
	Suppose that $S$ is a Damek--Ricci space.  Then the following hold:
	\begin{enumerate}
		\item[\itemno1]
			there {exists a positive constant} $C$ such that 
			$$
				\bigmod{\{x\in S: \cN^\frb\! f(x)>\al \}}
				\leq C \, \int_{S} \, {\Phi\big(\mod{f}/\al}\big) \wrt \mu
			$$
			for every $\al >0$ and every {measurable function} $f$ {for which the right hand 
			side is finite};
		\item[\itemno2]
			$\cN^\frb$ is bounded on $\lp{S}$ for every $p$ in $(1,\infty]$;  
		\item[\itemno3]
			{ statement (i) is optimal in the sense that we cannot replace $\Phi$ 
			by another Young function $\Psi$ on $(0,\infty)$ such that 
			$\ds 
			\lim_{t\to+\infty} \, \frac{\Psi(t)}{\Phi(t)} = 0.
			$}
			{In particular,} $\cN^\frb$ is not of weak-type $(1,1)$.   
	\end{enumerate}
\end{theorem}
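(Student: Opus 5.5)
We plan to exploit the horocyclic structure of $S=N\rtimes A$, writing points as $(n,a)$ with $a=\e^{t}$, so that the half ball $b_R(x)$ lies ``below'' the horosphere through $x$. The key geometric input is that $b_R(x)$ is comparable to a union of ``horocyclic boxes'': for $R\ge1$, $b_R(n_0,a_0)$ is contained in a set of the form $\{(n,a): a\le a_0,\ n\in n_0\cdot \delta_{a_0}(Q_{R})\}$. The heights then range over $t\in[t_0-R,t_0]$, and in the $N$-direction the set is a dilated ball of size $\e^{R}$ relative to the scale $a_0$. Its measure is $\asymp \e^{QR/2}$ (where $Q$ is the homogeneous dimension of $N$), and this measure is concentrated near the bottom level $t_0-R$. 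Small radii $R<1$ are handled by local doubling, which gives weak type $(1,1)$ for that part. For $R\ge1$, we dominate $\cN^\frb f$ by a maximal operator over a dyadic-like family of sets, namely ``tents'' $T(P)$ built over dyadic cubes $P$ of $N$ at the level $a_P$. Such a family is nested like the tree in \cite{MS}: two tents are either disjoint or one contains the other, up to a bounded overlap. The discretisation will be done with a Christ-type dyadic system on the homogeneous group $N$, at all scales.

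For (i), we take the maximal tents $T(P_j)$ on which the average of $|f|$ exceeds $c\alpha$. Their union contains $\{\cN^\frb f>\alpha\}$, up to a constant enlargement whose measure is controlled. The difficulty is that averages over a tent are not averages over a comparable-measure region near its top: a point $x$ at height $t_0$ only sees tents whose bottom lies at height $t_0-R$ for some $R\ge0$. We estimate the measure of the union by decomposing each tent into horizontal layers $L_k(P)$, $k=0,\dots,R$, where layer $k$ has measure $\asymp\e^{Q(R-k)/2}$ relative to the tent. The condition $\frac1{|T|}\int_T|f|>\alpha$ forces $f$ to be large on a layer with a geometric weight. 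Summing over the maximal tents and over the layers gives a bound of the form $\sum_k \alpha^{-1}\int |f|\One_{\{|f|>\alpha \e^{-\epsilon k}\}}$. Interchanging the sum with the integral produces $\alpha^{-1}\int|f|\log(2+|f|/\alpha)$, which is exactly $\Phi$. This counting is the main obstacle. The approach I would try first is the stopping-time argument of \cite{MS} on trees, transferred through the dyadic structure; the new difficulty is that the boxes are not exactly half balls, and the $N$-directions are non-isotropic. I would use the Damek--Ricci distance formula to control containments in both directions.

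For (ii), interpolate (i) with the trivial $L^\infty$ bound via Orlicz/Marcinkiewicz interpolation. Since $\Phi(t)\le C_\epsilon t^{1+\epsilon}$, the estimate (i) implies weak type $(p,p)$ for every $p>1$; Marcinkiewicz interpolation then gives boundedness on $L^p$ for all $p\in(1,\infty]$. For (iii), we test on $f=\One_{B_1(o)}$ scaled, or better on $f=\One_E$ where $E$ is a thin unit box at low height $t=-R$. Every point $x$ at height $t_0\in[-R,0]$ lying above $E$ in a cone admits a half ball $b_{R'}(x)$ reaching down to $E$ with $|E\cap b|/|b|\gtrsim \e^{-c(t_0+R)}$. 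The level set $\{\cN^\frb f>\alpha\}$ therefore has measure $\gtrsim \log(1/\alpha)/\alpha$ times $|E|$, whereas $\int\Psi(f/\alpha)=o(\alpha^{-1}\log(1/\alpha))$ as $\alpha\to0$ whenever $\Psi/\Phi\to0$. Making this precise, including the choice of $E$ so that the level set genuinely gains the logarithmic factor, is routine once the geometry from the first step is in place.
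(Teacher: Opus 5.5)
There is a genuine gap in (i), and the structural claim you want to build on cannot hold. To dominate half balls by sets of comparable measure, your tents must carry two parameters. The first is the horizontal cube $P$, which is fixed by the vertex height: $b_R(n_0a_0)\subset \cB_{\sqrt{a_0}}(n_0)\times(a_0\e^{-R},a_0)$, and the horizontal radius is independent of $R$ by Proposition~\ref{p: shadow on the boundary} and \eqref{eq:cylinder_n0a0}. A ball of size $\e^R$ relative to $a_0$ would have measure far larger than $\e^{\nu R}$. The second parameter is the depth $R$. Such boxes are not nested: if $P\subsetneq P'$, a deep tent over $P$ overlaps a shallow tent over $P'$ and protrudes below it. In fact no family that is ``disjoint or nested up to bounded overlap'' can dominate $\cN^{\frb}$. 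Selecting maximal tents with average $>\alpha$ would give $|\{\cN^{\frb} f>\alpha\}|\lesssim \|f\|_1/\alpha$, contradicting part (iii). The layer counting meant to replace this is not carried out, and the bound you display is wrong as written. The thresholds $\alpha\e^{-\epsilon k}$ decrease, so $\sum_k \One_{\{|f|>\alpha \e^{-\epsilon k}\}}=\infty$ wherever $f\neq 0$.

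What is missing is the C\'ordoba--Fefferman overlap estimate. The paper dominates by upward-infinite admissible cylinders $\cB_{\sqrt{a_0}}(n_0)\times(a_0\e^{-R},\infty)$ with bases at integer log-heights. It selects maximal ones and uses a Vitali-type lemma on each horocycle to make those with bases on the same horocycle disjoint. A point lying in $k$ selected cylinders then has $k$ bases at distinct integer log-heights below it. Hence it lies at least $k-1$ unit layers above the lowest base, where that cylinder carries only an $\e^{-\nu(k-1)}$ fraction of its mass. This gives $|\{\Omega=k\}|\lesssim \e^{-k}|G|$ (Proposition~\ref{p: overlapping NA}), and hence $\int(\e^{\Omega/2}-1)\lesssim |G|$. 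Combine this with $|G|\le \int (|f|/\alpha)\,\Omega$ and Young's inequality for the inverse pair $\e^{u/2}-1$, $2\log(1+v)$. Absorbing a small multiple of $|G|$ yields exactly $\Phi$. This exponential integrability of the overlap function, which is dual to $L\log L$, is the idea your sketch lacks.

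The other two parts also need repair. In (ii), $\Phi(t)\le C_\epsilon t^{1+\epsilon}$ fails near $t=0$, where $\Phi(t)\sim t\log 2$, so (i) does not by itself give weak type $(p,p)$. You must first split $f$ at height $\alpha$. Control $f\One_{\{|f|\le\alpha\}}$ by the trivial $L^\infty$ bound, and apply (i) only to $f\One_{\{|f|>\alpha\}}$, where $|f|/\alpha>1$. This is routine and is what the paper does.

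In (iii), the half balls you use have vertex in the cone above $E$ and bottom at the level of $E$. They are nested up to a fixed horizontal dilation, so their union has measure $\lesssim |E|/\alpha$ and the logarithm is lost. The admissible vertices themselves form a set of measure only $O(\log(1/\alpha))$. The gain comes from fixing the depth $R_\alpha$ with $\e^{\nu R_\alpha}\asymp 1/\alpha$ (for a fixed test set) and letting the vertex run through $w_j=(0,0,\e^j)$, $j=1,\dots,R_\alpha$. All these trigona contain the test ball, and their bottoms are staggered by one unit of logarithmic height. Each difference $T_{R_\alpha}(w_j)\setminus T_{R_\alpha}(w_{j-1})$ contains a horizontal annular layer of measure $\gtrsim \e^{\nu R_\alpha}$ (Lemma~\ref{l: gambas}). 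This yields $|\{\cN^{\frb} \One_B>\alpha\}|\gtrsim \alpha^{-1}\log(1/\alpha)$.
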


\noindent

Notice that only half balls with large radii (say greater than one) really matter.  Indeed, denote by 
$\frb_0$ and~$\frb_\infty$ the collection of all half balls~$b_R(x)$ for which $R\leq 1$ and $R>1$, respectively, 
and define, much as for~$\frb$, the operators $\cN^{\frb_0}$ and $\cN^{\frb_\infty}$.  These operators are the
\textit{local} and the \textit{global} uncentred maximal operators associated to the family $\frb$.
A straightforward argument based on a partition of unity on $S$ associated to balls of radius $1$ with 
the finite overlapping property and the local doubling property of the Riemannian measure shows that the operator $\cN^{\frb_0}$ 
is of weak type $(1,1)$ and bounded on $\lp{S}$ for every $p$ in $(1,\infty]$. Thus, we focus on the operator~$\cN^{\frb_\infty}$,
and it suffices to prove Theorem~\ref{t: main} with $\frb_\infty$ in place of $\frb$. 


The main idea to prove Theorem~\ref{t: main} is to rely on the geometric approach to the $L^p$ boundedness of the uncentred maximal 
operator developed by A.~C\'ordoba and R.~Fefferman \cite{CF}.  Loosely speaking, given a 
family $\fF$ of subsets of a metric measure space $X$, the $\lp{X}$ boundedness of $\cN^{\fF}$ for $p$ in $(1,\infty)$ 
is equivalent to a variant of a classical covering lemma in which mutual disjointness of subsets in any
appropriate subcollection $\fG'$ of a given collection $\fG$ is replaced by appropriate estimates of 
\begin{equation} \label{f: CF inequality}
	\Bignorm{\sum_{b\in\fG'}\, \One_b\,}{p'}
	\leq A_p \bignorm{\One_G}{p},
\end{equation}
where $G$ denotes the union of all $b$ in $\fG$ and $p'$ the index conjugate to $p$.
This estimate says that the balls in~$\fG'$ have ``finite overlapping in the~$L^{p'}$ norm''.  By Ionescu's countererexample
and the C\'ordoba--Fefferman result, the analogue of \eqref{f: CF inequality} for balls fails for~$p'$ in $(2,\infty)$.  
So, retrospectively, our result says that on Damek--Ricci spaces half balls intersect ``much less'' than balls.  
This phenomenon does not occur on Euclidean spaces.  

To prove \eqref{f: CF inequality}, we first show that $\cN^{\frb_\infty}$ can be estimated from above by an uncentred 
maximal operator with respect to a different family of sets, and then show that this new family of sets satisfies
\eqref{f: CF inequality}. 

Our result has a geometric flavour that can be appreciated by a possibly wider audience in the special case of the 
upper half plane.  Indeed, in this case the sets involved in the proof of our main theorem can be quickly drawn and the  {arguments can be}
followed more easily.  For this reason we have chosen to dedicate a section to this special case.  

The paper is organised as follows.  Section~\ref{s: Case BH} contains a description 
of the families of sets in the hyperbolic upper half plane that are relevant for the proof of our main theorem.
Section~\ref{s: Damek--Ricci spaces} contains some background material concerning Damek--Ricci spaces, together with
the definition of the families of sets that will be involved in the proof of our main result, and some of the properties of 
the associated maximal operators. 
The proof of parts \rmi\ and \rmii\ of our main result is contained in Section~\ref{s: The upper half-plane}.
Section~\ref{s: The upper half plane iii} is dedicated to the proof of Theorem~\ref{t: main}~{\rmiii}.
An interesting variant of $\cN^\frb$ is considered in Section~\ref{s: Variants}.  

We use the ``variable constants convention'', and denote by~$C$ a constant whose actual value may vary from place to place 
and which might depend on factors quantified before its occurrence, but not on factors quantified after. 

The expression
$$
A(t) \asymp B(t) \quant t \in {\bD},
$$
where ${\bD} $ is some subset of the domains of $A$ and of $B$, means that there exist (positive) constants $C$ and $C'$ such that
$$
C\,  A(t) \leq B(t) \leq C'\, A(t) \quant t \in {\bD};
$$
$C$ and $C'$ may depend on any quantifiers written \textit{before} the displayed formula.

\section{Relevant sets and reduction of the problem on $\BH$} \label{s: Case BH}

The purpose of this section is to describe the families of sets in the
hyperbolic upper half plane that are relevant in the proof of our main result.
Most of the proofs of their properties will be omitted: we shall 
give full proofs of the corresponding results in Damek--Ricci spaces.  

It is well known that the distance $d$ associated to the Riemannian metric \eqref{f: hyp metric} is given by 
$$
d(z,w)
= 2 \, \arcsinh \, \frac{\mod{z-w}}{2 \, \sqrt {\Im z\, \Im w}}
\quant z, w \in \BH.  
$$
We denote by $B_R(z)$ the (open) ball with centre $z$ and radius~$R$ with respect to the distance $d$, and by 
$\fB$ the family of all these balls.  

We recall that the geodesics in $\BH$ are either straight vertical rays orthogonal to the $x$-axis, or Euclidean upper 
half circles with centre on the $x$-axis.  { Given $z$ in $\BH$, we set $h(z) := \Im z$; we call $h(z)$
the \textit{height} of $z$.  Note that the Busemann function associated to the upward vertical geodesic starting at $i$
is the function $\log h$.  We observe that $\log h(z)$ is equal to $d(z,i\Im z)$ if $h(z)\geq 1$, and to $-d(z,i\Im z)$
if $0 < h(z) < 1$.}
For each $u>0$ we set 
{ 
$$
\cH_u 
:= \big\{z\in \BH: h(z) = u \big\}
\quad\hbox{and}\quad
\Pi_u 
:= \big\{z\in \BH: h(z) > u \big\}.
$$
}
Notice that $\cH_u$ is a horocycle with respect to the point at infinity in the vertical direction.

For $z$ in $\BH$, the \textit{special half plane through} $z$ is the region $T(z)$ in $\BH$  {lying}
below the geodesic through $z$ which is tangent at $z$ to the horocycle $\cH_{\Im z}$.  Such geodesic is
the upper half of 
the Euclidean circle with centre $(\Re z,0)$ and radius $\Im z$.  Another way to describe $T(z)$ is the following:  {
\begin{equation} \label{f: triangoloid}
	T(z)
	:= \big\{w\in \BH: \bigmod{w-\Re z} < \Im z\big\},
\end{equation}
where $\mod{\cdot}$ denotes the complex modulus.  }

For each positive number $R$ and for each point $z$ in $\BH$ we consider 
$$
b_R(z) := B_R(z) \cap T(z),
$$
which we call the Riemannian (lower) \textit{half ball with centre $z$ and radius~$R$}.  
We observe that $b_R(z)$ is a half ball in the geometric sense, because the intersection of the ball $B_R(z)$ with 
the geodesic through $z$ tangent to the horocycle $\cH_{\Im z}$ is indeed a diameter of $B_R(z)$.  
Set 
$$
\frb:= \big\{b_R(z): z\in \BH,\, R>0\big\}.
$$
It is well known that $SL(2,\BR)$ acts isometrically on $\BH$ via fractional linear transformations.  
Specifically, the matrix $M = \ds\matrice{\al}{\be}{\ga}{\de}$ in $SL(2,\BR)$ acts on the point $w$ in $\BH$ by 
$$
M\cdot w
:= \frac{\al w+\be}{\ga w + \de}.
$$
In particular, for each $z$ in $\BH$, we consider the matrix
\begin{equation} \label{f: Mz}
M_z 
:= \matrice{\sqrt{\Im z}}{\Re z/\sqrt{\Im z}}{0}{1/\sqrt{\Im z}},
\end{equation}
and note that
$$
M_z \cdot w
= w \Im z+\Re z
\quant w \in \BH.
$$
Clearly this action preserves the hyperbolic distance and the Riemannian area.

We now collect a few formulae which will be useful later.  We have
$$
d(iy_1,iy_2) = \log (y_2/y_1)
\qquad 0<y_1<y_2.
$$
The geodesic ball $B_R(z)$ agrees with the Euclidean disc with centre 
$$
\Re z + i(\Im z) \cosh R
\qquad\hbox{and radius}\qquad (\Im z) \sinh R.
$$
Thus, the boundary of $B_R(z)$ is the circle $C_R(z)$ consisting of all points $x+iy$ such that
$$
(x-\Re z)^2+\big(y-(\Im z)\,\cosh R\big)^2 = (\Im z)^2\,(\sinh R)^2.
$$
It is straightforward to check that the point { on this circle with lowest height} is $\Re z + i\e^{-R} \, \Im z$ and
that the horocycle $\cH_{\e^{-R} \, \Im z}$ is tangent to $C_R(z)$ at this point. 

Note that $C_R(z)$ intersects the boundary of $T(z)$ at the points
\begin{equation} \label{f: qzRm}
q_{z,R}^-
:= \Re z- (\Im z)\tanh R +i \,\frac{\Im z}{\cosh R}
\end{equation}
and
\begin{equation} \label{f: qzRp}
q_{z,R}^+
:= \Re z+ (\Im z)\tanh R +i \,\frac{\Im z}{\cosh R}.  
\end{equation}
Notice that $b_R(z)$ contains the Euclidean segment joining these two points. 

For $R>0$ and $z$ in $\BH$, we set
\begin{equation} \label{f: triangoloid II}
	T_R(z)
	:= T(z) \cap {\Pi}_{\e^{-R} \Im z}.
\end{equation}
We call $T_R(z)$ the \textit{trigonon with vertex $z$ and height~$R$}.  
Thus, $T_R(z)$ is the subset of $T(z)$ of all points that lie on or above the horocycle $\cH_{\e^{-R}\Im z}$.  
Note that $T(z) \cap \cH_{\e^{-R} \Im z}$ is the (Euclidean) segment joining the points 
$$
p_{z,R}^- := \Re z - (\Im z) \, \sqrt{1-\e^{-2R}} + i\e^{-R}\Im z
$$ 
and 
$$
p_{z,R}^+ := \Re z + (\Im z) \, \sqrt{1-\e^{-2R}}+i \e^{-R}\Im z.
$$
Set 
$$
\fT_\infty := \big\{T_R(z): z\in \BH,\, R>1\big\}.  
$$
Finally we consider the infinite (open) rectangle 
\begin{equation} \label{f: def QRz}
Q_R(z)
:= (\Re z-\Im z, \Re z+\Im z) \times (\e^{-R} \, \Im z, \infty).
\end{equation}
Clearly $Q_R(z)$ contains $b_R(z)$.  Set 
$$
\fQ_\infty := \big\{Q_R(z): z\in \BH,\, R>1\big\}.
$$
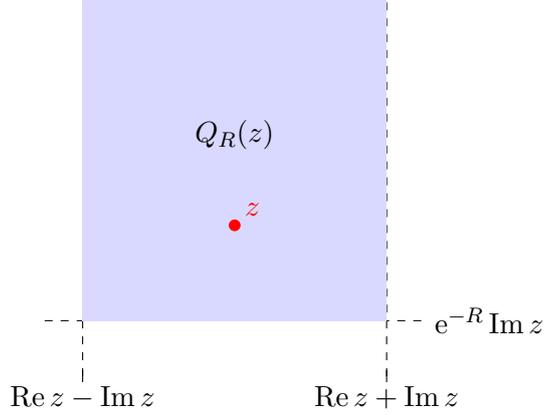
\begin{figure}[ht]
\centering
\begin{tikzpicture}[scale=1.0]

\def\x{1.5}
\def\y{2}

\def\R{1}

\pgfmathsetmacro{\ymin}{exp(-\R)*\y}


\draw[dashed] ({\x-\y},0) -- ({\x-\y},5);
\draw[dashed] ({\x+\y},0) -- ({\x+\y},5);

\draw ({\x-\y},0.08) -- ({\x-\y},-0.08);
\node[below] at ({\x-\y},0) {$\Re z - \Im z$};

\draw ({\x+\y},0.08) -- ({\x+\y},-0.08);
\node[below] at ({\x+\y},0) {$\Re z + \Im z$};

\draw[dashed] (-1,{\ymin}) -- (4,{\ymin});
\node[right] at (4,{\ymin}) {$\e^{-R}\Im z$};

\fill[blue!15]
    ({\x-\y},{\ymin}) rectangle ({\x+\y},5);

\filldraw[red] (\x,\y) circle (2pt)
    node[above right] {$z$};

\node at (\x,3.2) {$Q_R(z)$};

\end{tikzpicture}
	\caption{The infinite rectangle $Q_R(z)$}\label{rectangles}

	\end{figure}
Straightforward computations prove the following.

\begin{proposition} \label{p: properties sets}
	For any $R>1$ and $z$ in $\BH$ we have that
	\begin{equation} \label{f: area of triangoloid}
	\left. \begin{aligned}
		\bigmod{B_R(z)} & = 4\pi\, \big(\sinh(R/2)\big)^2\\
		\bigmod{b_R(z)} & = (1/2)\, \bigmod{B_R(z)} \\
		\bigmod{T_R(z)} & = 2\, \e^R\, \sqrt{1-\e^{-2R}} \, - 2\, \arccos \big(\e^{-R}\big) \\
		\bigmod{Q_R(z)} & = 2\, \e^R
	\end{aligned}
		\right\}
		\asymp \e^R. 
	\end{equation}
\end{proposition}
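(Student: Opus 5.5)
First I will reduce everything to the base point $i$. Since $M_z$ in \eqref{f: Mz} acts by $w\mapsto w\Im z+\Re z$, it is an isometry preserving $\mu$, and it maps $B_R(i)$, $T(i)$, $\Pi_{\e^{-R}}$ and the rectangle $(-1,1)\times(\e^{-R},\infty)$ onto $B_R(z)$, $T(z)$, $\Pi_{\e^{-R}\Im z}$ and $Q_R(z)$ respectively. Consequently each of the four measures is independent of $z$, and I only need to compute them for $z=i$.

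For the ball I will take the centre $i$ and use geodesic polar coordinates, in which the area element is $\sinh r\wrt r\wrt\theta$. This gives $\mod{B_R(i)}=2\pi(\cosh R-1)=4\pi\sinh^2(R/2)$. For the half ball, the reflection $w\mapsto 1/\bar w$ (inversion in the unit circle) is an isometry that fixes $i$ and fixes the geodesic $\{\mod w=1\}$ pointwise. It therefore preserves $B_R(i)$ and exchanges $T(i)=\{\mod w<1\}$ with its complement in $\BH$. Since the geodesic itself is a null set, $\mod{b_R(i)}=\tfrac12\mod{B_R(i)}$. For $Q_R(i)$ the computation is direct: $\int_{-1}^1\int_{\e^{-R}}^\infty y^{-2}\wrt y\wrt x=2\e^R$.

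The trigonon is the only case that needs a genuine computation. I will integrate in $y$ first over the region $\{x^2+y^2<1,\ y>\e^{-R}\}$. For fixed $x$ with $\mod x<\sqrt{1-\e^{-2R}}$, the inner integral is $\int_{\e^{-R}}^{\sqrt{1-x^2}}y^{-2}\wrt y=\e^R-(1-x^2)^{-1/2}$. Integrating this in $x$ over $(-a,a)$, where $a=\sqrt{1-\e^{-2R}}$, gives $2a\e^R-2\arcsin a$. Finally, $\arcsin\sqrt{1-\e^{-2R}}=\arccos(\e^{-R})$, which yields the stated formula.

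The step needing a little care is the last claim, that all four quantities are $\asymp\e^R$ for $R>1$. For the ball and half ball this follows from $4\pi\sinh^2(R/2)=\pi(\e^R-2+\e^{-R})$, which lies between constant multiples of $\e^R$ when $R>1$. For the trigonon, $\arccos(\e^{-R})\le\pi/2$ while $2\e^R\sqrt{1-\e^{-2R}}\ge2\e^R\sqrt{1-\e^{-2}}$. So for $R>1$ the first term dominates, and the difference is still bounded below by a constant multiple of $\e^R$, because $2\e\sqrt{1-\e^{-2}}-\pi\approx 1.92>0$. The upper bound $\le 2\e^R$ is immediate.
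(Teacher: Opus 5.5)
Your proposal is correct and is essentially what the paper intends: the paper gives no proof here beyond ``straightforward computations''. Your argument supplies exactly that computation, namely the reduction to $z=i$ via $M_z$, polar coordinates for the ball, iterated integration for the trigonon and rectangle, and the isometric reflection $w\mapsto 1/\bar w$ for the half ball, which mirrors the symmetry argument the paper uses via the Cayley transform in Proposition~\ref{p: properties setsNA}. The only slip is numerical and immaterial: $2\e\sqrt{1-\e^{-2}}-\pi\approx 1.91$, not $1.92$.
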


For any $Q$ in $\fQ_\infty$ we denote by ${ \be} (Q)$ its base.  For each nonnegative integer~$k$ we define $\be_k(Q)$ by
$$
\be_k(Q)
:= \big\{w\in Q: k< d\big(w, { \be} (Q)\big) \leq k+1 \big\}.
$$
Clearly $\ds Q = \bigcup_{k=0}^\infty \, \be_k(Q)$.  
Notice that ${ \be} (Q)$ is contained in the ``lower boundary'' of $\be_0 (Q)$. 
A straightforward computation shows that
$$
\ds \mod{Q} \asymp \bigmod{\be_0(Q)}
\quant Q \in \fQ_\infty.
$$

\subsection{Reduction of the problem}
We call an infinite rectangle $Q_R(z)$ \textit{admissible} if $\Im z$ is of the form $\e^j$ with $j$ in $\BZ$,
and~$R$ is a nonnegative integer.  Observe that the ``base" of $Q_R(z)$ is contained in the horocycle~$\cH_{\e^{j-R}}$.  
Set
$$
\fQ_\infty'
:= \big\{\hbox{admissible infinite rectangles in $\fQ_\infty$} \big\}.
$$
For some of the estimates we shall prove in the sequel, it will be more convenient from a geometric point of view to consider
uncentred maximal functions with respect to $\fT_\infty$ and $\fQ_\infty'$ rather than $\frb_\infty$.  
Some useful relations between~$\cN^{\frb_\infty}$ and either $\cN^{\fT_\infty}$ or $\cN^{\fQ_\infty'}$ are described in 
Propositions~\ref{p: inclusions T and b} and~\ref{p: admissible}, respectively.

\begin{proposition} \label{p: inclusions T and b}
	The following hold:
	\begin{enumerate}
		\item[\itemno1]
		$
		T_{R-\log 2}(z) \subset b_R(z) \subset T_R(z) \subset b_{R+\log 2}(z) 
		$
		for every $z$ in $\BH$ and for every $R\geq 1$;
		\item[\itemno2]
		there exist positive constants $c$ and $C$ such that 
		$$
		c\, \cN^{\fT_\infty}\! f
		\leq \cN^{\fb_\infty}\! f 
		\leq C\, \cN^{\fT_\infty}\! f.   
		$$
	\end{enumerate}
\end{proposition}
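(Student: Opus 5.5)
By the isometry $w\mapsto M_z\cdot w=w\Im z+\Re z$, which maps $i$ to $z$, the horocycles $\cH_u$ to $\cH_{u\Im z}$ and $T(i)$ onto $T(z)$, all three sets in \rmi\ are images of the corresponding sets centred at $i$. It therefore suffices to prove \rmi\ for $z=i$. There everything is explicit:
\begin{itemize}
\item[] $T(i)=\{\mod{w}<1\}$;
\item[] $B_R(i)$ is the Euclidean disc with centre $i\cosh R$ and radius $\sinh R$;
\item[] $T_R(i)=\{\mod{w}<1,\ \Im w>\e^{-R}\}$.
\end{itemize}

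For the inclusion $b_R(i)\subset T_R(i)$, I use that the lowest point of $\overline{B_R(i)}$ is $i\e^{-R}$, and the horocycle $\cH_{\e^{-R}}$ is tangent there. Hence $B_R(i)\subset\Pi_{\e^{-R}}$, and intersecting with $T(i)$ gives the claim.

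For $T_{R'}(i)\subset b_{R'+\log 2}(i)$, which covers both remaining inclusions via $R'=R-\log2$ and $R'=R$, take $w\in T(i)$ with $\Im w>\e^{-R'}$. Since $\mod{w}<1$ we have $\mod{w-i}<2$, and $\Im w\cdot\Im i=\Im w$. The distance formula then gives
$$
d(w,i)<2\,\arcsinh\big(\e^{R'/2}\big)\le \log\big(4\e^{R'}+2\big)...
$$
This needs care, so I would rather use the disc description directly. Since $w=x+iy$ satisfies $x^2+y^2<1$, the condition $w\in B_\rho(i)$, namely $x^2+(y-\cosh\rho)^2<\sinh^2\rho$, becomes $x^2+y^2+1<2y\cosh\rho$. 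It therefore suffices that $2<2y\cosh\rho$, i.e.\ $y>1/\cosh\rho$. With $\rho=R'+\log 2$ we have $\cosh\rho\ge \e^\rho/2=\e^{R'}$, so $y>\e^{-R'}$ suffices. This gives $T_{R'}(i)\subset B_{R'+\log2}(i)\cap T(i)=b_{R'+\log 2}(i)$. Note that $R-\log2$ might be less than $1$, but the definition of $T_R$ makes sense for every $R>0$, and $R\ge1>\log2$, so this causes no problem.

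For \rmii, I split $\frb_\infty=\frb_{(1,1+\log2]}\cup\frb_{(1+\log 2,\infty)}$. Take $x\in b_R(z)$ with $R>1+\log2$. By \rmi, $x\in b_R(z)\subset T_R(z)\in\fT_\infty$. Moreover $\mod{T_R(z)}\le\mod{b_{R+\log2}(z)}\le C\mod{b_R(z)}$ by Proposition~\ref{p: properties sets}, since $\mod{b_R}\asymp \e^R$. Hence the average of $\mod f$ over $b_R(z)$ is at most $C$ times its average over $T_R(z)$. For $1<R\le1+\log2$ the same argument works, since $T_R(z)$ is still in $\fT_\infty$; this yields $\cN^{\frb_\infty}f\le C\cN^{\fT_\infty}f$.

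Conversely, take $x\in T_R(z)$ with $R>1$. Then $x\in b_{R+\log2}(z)\in\frb_\infty$, and $\mod{b_{R+\log 2}(z)}\le C\e^R\le C'\mod{T_R(z)}$ by Proposition~\ref{p: properties sets}, whose estimate $\mod{T_R}\asymp \e^R$ holds for $R>1$. This gives $c\,\cN^{\fT_\infty}f\le\cN^{\frb_\infty}f$.

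The only delicate points are the elementary inequality $\cosh\rho\ge \e^{\rho}/2$ in the inclusion step and keeping track of the radius ranges, so that every set used lies in the correct family and satisfies the comparability of measures.
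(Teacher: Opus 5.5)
Your proof is correct and follows essentially the same route as the paper, which proves only the Damek--Ricci analogue (Proposition~\ref{p: inclusions T and bNA}) and omits the $\BH$ case. Both arguments reduce to the base point by an isometry, obtain $b_R\subset T_R$ from the lowest point of the ball, and obtain $T_{R-\log 2}\subset b_R$ by showing the ball contains every point of $T$ above height $1/\cosh R\le 2\,\e^{-R}$. Both then compare averages using the volume estimates.

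Your Euclidean-disc computation $x^2+y^2+1<2y\cosh\rho$ is a cleaner, fully explicit way to get that height threshold in $\BH$; the paper instead bounds the heights of boundary points through the geodesic formula. You should delete the abandoned $\mathrm{arsinh}$ line, which as written would only give a loss of about $\log 4$. The split of $\frb_\infty$ at $1+\log 2$ is also superfluous, since your argument never uses it.
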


\begin{proposition} \label{p: admissible}
	There exists a constant $C$ such that the following hold:
	\begin{enumerate}
		\item[\itemno1]
		for every $z$ in $\BH$ and $R$ in $(1,\infty)$ there exists an admissible infinite rectangle $Q$ 
		containing $Q_R(z)$ satisfying
		$$
			\frac{\mod{Q}}{\mod{Q_R(z)}}
			\leq C;   
		$$
		\item[\itemno2]
		for every measurable function $f$ the following inequality holds
		\begin{equation} \label{f: inequality max adm}
			\cN^{\frb_\infty}\! f 
			\leq C\,\, \cN^{\fQ_\infty'}\! f.
		\end{equation}
	\end{enumerate}
\end{proposition}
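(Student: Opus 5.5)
For part (i) I will round the parameters of $Q_R(z)$ to admissible ones and then absorb the extra area. Write $\Im z = \e^{s}$ with $s\in\BR$. Let $j:=\lceil s\rceil$, so that $\e^{j-1}<\Im z\leq \e^j$. Let $k$ be the smallest integer with $j-k\leq s-R$, which gives $k< R+2$ (indeed $k\leq R+(j-s)+1<R+2$). Set $Q:=Q_k(w)$ with $w:=\Re z+i\e^{j}$.

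Containment $Q_R(z)\subset Q$ then holds in both coordinates. Horizontally, $(\Re z-\Im z,\Re z+\Im z)\subset(\Re z-\e^j,\Re z+\e^j)$ because $\Im z\leq \e^j$. Vertically, the base height of $Q$ is $\e^{j-k}\leq \e^{s-R}=\e^{-R}\Im z$, the base height of $Q_R(z)$.

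Next I compare areas using Proposition~\ref{p: properties sets}, and this comparison does not involve the centre. We have $\mod{Q}=2\e^{k}$ and $\mod{Q_R(z)}=2\e^{R}$, so
$$
\frac{\mod{Q}}{\mod{Q_R(z)}}=\e^{k-R}\leq \e^{2}.
$$
One also needs $k\geq 1$ so that $Q\in\fQ_\infty$; this is automatic, since $k\geq R+j-s>1$. So $C=\e^2$ works. The formula $\mod{Q_R(z)}=2\e^R$ can be checked directly: the integral of $y^{-2}$ over the rectangle equals $2\Im z\cdot \e^{R}/\Im z$.

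For part (ii), fix $x$ and a half ball $b_R(z)\ni x$ with $R>1$. By the definition \eqref{f: def QRz}, $b_R(z)\subset Q_R(z)$, and $Q_R(z)\subset Q$ with $Q$ the admissible rectangle from (i). Then
$$
\frac1{\mod{b_R(z)}}\int_{b_R(z)}\mod f\wrt\mu\leq \frac{\mod{Q}}{\mod{b_R(z)}}\cdot\frac1{\mod Q}\int_Q\mod f\wrt\mu .
$$
The ratio $\mod{Q}/\mod{b_R(z)}$ is bounded by (i) together with $\mod{Q_R(z)}\asymp\mod{b_R(z)}$ for $R>1$, which again comes from Proposition~\ref{p: properties sets}. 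Since $x\in Q$, the right-hand side is at most $C\,\cN^{\fQ_\infty'}\! f(x)$. Taking the supremum over all such $b_R(z)$ gives \eqref{f: inequality max adm}.

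The main point of care is the rounding in (i), which must simultaneously widen the horizontal side, lower the base, and keep the integer height $k$ within a bounded distance of $R$; the rest is bookkeeping.
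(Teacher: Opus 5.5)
Your proof is correct and follows essentially the same route as the paper. The paper proves the Damek--Ricci analogue, Proposition~\ref{p: admissibleNA}, by rounding the logarithm of the centre's height up to an integer and replacing $R$ by $\lceil R\rceil+2$, checking containment coordinatewise, and comparing averages exactly as you do in (ii); your choice $k=\lceil R+j-s\rceil$ is a slightly tighter version of the same rounding and gives the constant $\e^{2}$ in (i).
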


A direct consequence of these propositions is that it suffices to prove Theorem~\ref{t: main}~\rmi-\rmii\ with $\cN^{\fQ_\infty'}$
in place of $\cN^{\fb_\infty}$, and Theorem~\ref{t: main}~\rmiii\ with~$\cN^{\fT_\infty}$ in place of $\cN^{\fb_\infty}$. 

\section{Damek--Ricci spaces} \label{s: Damek--Ricci spaces}
Consider a real Lie algebra $\frn$, equipped with a scalar product 
$\prodo{\cdot}{\cdot}$, that can be written as an orthogonal direct sum, thus
$$
\frn
= \frv \oplus \frz, 
$$
where $[\frv, \frv] \subseteq \frz$ and $[\frv,\frz] = 0 = [\frz,\frz]$.  If either $\frv$ or $\frz$ is reduced to $0$,
then we say that $\frn$ is \textit{degenerate}.  Denote by $p$ and $q$ the dimensions of~$\frv$ and $\frz$, respectively.  
The \textit{homogeneous dimension} $\nu$ of $\frn$ is then defined as
$$
\nu
:= \frac{p}{2} + q.  
$$
For each $Z$ in~$\frz$, consider the
linear map $J_Z$ on $\frv$, defined by
\begin{equation} \label{f: def JZ}
\prodo{J_ZX}{X'}
:= \prodo{Z}{[X,X']}
\quant X, X' \in \frv.  
\end{equation}
We shall assume throughout that $\frn$ is $H$-type, i.e.
\begin{equation} \label{f: def H type}
J_Z^2 = -\mod{Z}^2\, I,
\end{equation}
where $I$ denotes the identity mapping on $\frv$, and $\mod{Z}^2$ is short for $\prodo{Z}{Z}$.  There is an extensive literature
on $H$-type groups, part of which can be found in the bibliographic references of \cite{CDKR1, CDKR2}.  We refer to these 
two papers for any unexplained notation concerning $H$-type groups and their harmonic extensions (defined below).  
It is also well known that 
\begin{equation} \label{f: polarisation}
\prodo{J_ZX}{J_ZY}
= \mod{Z}^2 \, \prodo{X}{Y}
\quant X,Y \in \frv \quant Z\in \frz.
\end{equation}
We shall consider $H$-type groups $N$, i.e. connected, simply connected real Lie groups whose algebra is $H$-type.  The
exponential mapping is a bijection of $\frn$ onto $N$.  Thus, we consider the parametrisation of $N$ that assigns to an
element $\exp(X+Z)$ in $N$ (here $X$ and $Z$ belong to $\frv$ and $\frz$, respectively) the coordinates $(X,Z)$. 
By the Baker--Campbell--Hausdorff formula, in these coordinates, the group law takes the form
\begin{equation} \label{f: group law N}
(X,Z) \cdot (X',Z')
= \big(X+X',Z+Z'+ (1/2) \, [X,X']\big),
\end{equation}
where $(X,Z)$ and $(X',Z')$ are in $N$. 

We consider the left-invariant Riemannian metric on $N$ that agrees at~$e_0$ (the neutral element in $N$) with the chosen 
scalar product on $\frn$.  The corresponding unit speed geodesics in $N$ through $e_0$ are described in 
\cite[Proposition~2.1]{CDKR1}.  

We shall also need the gauge $\cG$ on $N$, defined by 
\begin{equation} \label{f: group law N}
	\cG(X,Z)
	:= \Big(\frac{\mod{X}^4}{16} + \mod{Z}^2\Big)^{1/4}
	\quant (X,Z) \in \frn.  
\end{equation}
If $n$ denotes the element $\exp(X+Z)$, then we also write $\cG(n)$ in place of $\cG(X,Z)$.
By a result of J. Cygan \cite{C}, the gauge $\cG$ is a homogeneous norm, and we denote by $d_N$ the corresponding 
left-invariant distance on $N$, defined by 
\begin{equation}\label{eq:distN}
d_N(n, n')
	:=\cG \big(n^{-1}n'\big).
\end{equation}
For each positive number $R$, we denote by $\cB_R(n_0)$ the ball in $N$ with centre~$n_0$ and radius $R$ with
respect to the distance $d_N$.

The Lebesgue measure $\wrt n$ is a translation-invariant measure on $N$ and the metric measure space $(N, d_N, \wrt n)$ is 
homogeneous in the sense of Coifman and Weiss.  In particular, the Lebesgue measure is doubling on $N$. 

We denote by $A$ the multiplicative group of the positive reals.  Its Lie algebra $\fra$ can be identified with
$\BR$ via the map $t \mapsto tH$, where $H$ is a fixed element in $\fra$.  We define the inner product on $\fra$ for
which $\mod{H} = 1$.  We consider the semidirect product group $NA$, defined as follows.  Denote by $\frs$
the Lie algebra $\frn\oplus\fra$, where the Lie bracket   {is} determined by linearity and the requirement that
$$
[H, X] = \frac{1}{2} \, X
\qquad\hbox{and}\qquad
[H, Z] = Z
$$
for all $X$ in $\frv$ and $Z$ in $\frz$.   

We denote by $(X,Z,t)$ the element $X+Z+tH$ in $\frs$: here $X$ belongs to $\frv$, $Z$ belongs to $\frz$,
and $t$ is a real number.  
We extend the inner products on $\frn$ and $\fra$ to an inner product on $\frs$, by the rule
$$
\prodo{(X,Z,t)}{(X',Z',t')}_{\frs}
:= \prodo{(X,Z)}{(X',Z')}_{\frn} + tt'.
$$
In particular 
$$
\prodo{(X,Z,t)}{(X,Z,t)}_{\frs}
= \mod{X}^2 + \mod{Z}^2 + t^2.
$$
For every positive number $R$ set
\begin{equation} \label{f: B1 frs}
	B_1(\frs) 
	:= \big\{(X,Z,t) \in \frs: \mod{X}^2+\mod{Z}^2 + t^2 < 1 \big\},
\end{equation}
and
\begin{equation} \label{f: S1 frs}
	S_1(\frs) 
	:= \big\{(X,Z,t) \in \frs: \mod{X}^2+\mod{Z}^2 + t^2 = 1 \big\}.
\end{equation}

We consider the left-invariant metric on $\exp(\frs)$ which agrees with the metric defined above on $\frs$, 
when we view $\frs$ as the tangent space to $\exp(\frs)$ at the identity.  The associated left-invariant distance on $S$ 
will be denoted by $d$.

A convenient parametrisation of $\exp(\frs)$ is obtained by denoting the element $\exp(X+Z) \, \exp\big((\log s)H\big)$ of $NA$  
 {simply by $(X,Z,s)$,} where $X$ is in~$\frv$,~$Z$ is in~$\frz$ and $s$ is a positive real number.  
Then it is straightforward to check that
\begin{equation} \label{f: group law NA}
	(X,Z,s) \cdot (X',Z',s')
	= \big(X+ \sqrt{s}\, X', Z+ s\, Z' + (\sqrt s/2) \, [X,X'], ss'\big),
\end{equation}
that $(0,0,1)$, which we denote by $e$, is the neutral element in $NA$ and that
$$
	(X,Z,s)^{-1}
	= \big(-(1/\sqrt s) \, X, -(1/s)\, Z , 1/s\big).
$$
for every $(X,Z,s)$ and $(X',Z',s')$ in $\exp(\frs)$.  

Note that $N$ (identified with the set of all elements of the form $(X,Z,1)$) is normal in $NA$, because
$$
(X',Z',s)^{-1}\, (X,Z,1) \, (X',Z',s) 
= \Big(\frac{1}{\sqrt s} \, X, \frac{1}{s} \, Z, 1\Big). 
$$
In particular, this formula says that $A$ acts on $N$ by anisotropic dilations.
We normalise the left-invariant measure on $NA$ as follows
\begin{equation} \label{f: Haar NA}
\wrt (X,Z,s) 
:= s^{-\nu-1} \, \wrt X \wrt Z \wrt s.
\end{equation}
It is well known (see e.g. \cite[Eq. (2.18)]{ADY}), that the geodesic distance $r(X,Z,s)$ of the point $(X,Z,s)$ 
from the origin satisfies
\[ 
\cosh^2\!\Big(\frac{r(X, Z, s)}{2}\Big)
=
\Big( \cosh(\log \sqrt s) + \frac{1}{8} \,\frac{|X|^2}{\sqrt s} \Big)^2 + \frac{1}{4} \,\frac{|Z|^2}{s}.
\]
By the left-invariance of the metric $d$ and the group law \eqref{f: group law NA}, it is straightforward to check that
\begin{equation}\label{eq:vertical_dist}
	\begin{aligned}
    	d\big((X, Z, s), (X, Z, s') \big)
		& = r\big((X, Z, s')^{-1} (X, Z, s)\big) \\
		& = r\big((0,0,s/s')\big) \\  
		& = \left|\log\frac{s}{s'}\right| \\
	\end{aligned}
\end{equation}
and, similarly, that if $n$ denotes the point $\exp(X+Z)$ of $N$, then 
\begin{equation}\label{eq:N_incr_dist}
	d(na, a')
	=d\big((0,0,1/a')\cdot (X, Z,a){ , (0,0,1)}\big)\geq \log\,\Bigmod{\frac{a}{a'}}
	=d(a, a').
\end{equation}

For each {vector $v := (X,Z,t)$ in $S_1(\frs)$}, denote by $\ga_{v}$ the geodesic in $NA$ such that 
$$
\ga_{v} (0) = e
\qquad\hbox{and}\qquad
\dot\ga_{v} (0) = { v}.  
$$
We recall the explicit formula for $\ga_v$ established in \cite[Proposition~2.2]{CDKR1}: 
\begin{equation} \label{f: geodesics S0}
	\ga_v(\tau)
	:= \Big(\frac{2\te(1-t\te)}{\chi}\, X - \frac{2\te^2}{\chi} \, J_ZX, \frac{2\te}{\chi}\, Z, \frac{1-\te^2}{\chi}\Big),
\end{equation}
where $\te := \tanh(\tau/2)$, $\tau \geq 0$,  and $\chi := (1-t\te)^2 + \mod{Z}^2 \, \te^2$.  In the ``ball model'' of $S$,
these geodesics correspond to straight lines through the origin: see \cite{CDKR1}.  
Set 
$$
T(e)
:= \big\{\ga_v(\tau): v \in { B_1(\frs)}, \,\, t{ <} 0, \, \, \tau \in { (}0,\infty) \big\}.  
$$ 
Clearly $T(e)$ (which is the analogue in $S$ of the special half plane $T(i)$ in $\BH$ introduced in \eqref{f: triangoloid})
is the set of all points on unit speed geodesics issuing from $e$ with tangent pointing ``downward''. 
In the ``ball model'' of $S$, the set $T(e)$ corresponds to a half of a ball centred at the origin. 

From \eqref{f: geodesics S0} we see that if $v = (0,0,-1)$, then the geodesic $\ga_v$ has the form
$$
\ga_v(\tau)
= \Big(0,0, \frac{1- \big(\tanh (\tau/2)\big)^2}{\big(1 + \tanh (\tau/2)\big)^2} \Big)
= \big(0,0, \e^{-\tau} \big). 
$$
Since $\ga_v$ is a unit speed geodesic, we deduce that
\begin{equation} \label{f: dist from e}
d\big(e,(0,0,\e^{-\tau}) \big)
= \tau,   
\end{equation}
where $d$ denotes the Riemannian distance on $NA$.  

Define the \textit{height function} $h$ on $NA$ by the rule
$$
h(na)
:= a 
\quant na \in NA.
$$
It is straightforward to check that the height of a point is preserved under left translations by elements in $NA$ of the 
form~$n\cdot 1$.  It is known that $h$ is the Busemann function associated to ``the point at infinity'' 
in the vertical direction.

For each $u>0$ we set 
$$
\cH_{u}
:= \big\{{ nu \in NA: n\in N} \big\}.
$$
Notice that $\cH_u$ is a horocycle with respect to the point at infinity in the vertical direction.  
Then the orbit of the point $e_0 u$ in $NA$ under the (normal) subgroup $N$ of $NA$ is just $\cH_{{ u}}$.
We also need  
$$
\Pi_u 
:= \big\{{ na}\in NA: { n \in N, \, a} > u \big\}.   
$$

\begin{remark} \label{rem: ball boundary I}
	\rm{
		Consider the geodesics $\ga_v$ in \eqref{f: geodesics S0},
		where $v := (X,Z,t)$, $t\leq 0$, and $\mod{X}^2 + \mod{Z}^2  + t^2= 1$.  
		We \textit{claim} that 
		$$
			\e^{-R}
			\leq h\big(\ga_v(R)\big) 
			\leq \frac{1}{\big(\cosh (R/2)\big)^2}.  
		$$
		Indeed, observe that $\chi = 1-2t\te + (t^2+\mod{Z}^2) \, \te^2$, and recall that $t$ is nonpositive.  
		Therefore $\chi$ is always greater than or equal to $1$.  Notice that $\chi = 1$ when
		$t=0=\mod{Z}$ and $\mod{X}=1$ (a choice which satisfies all the constraints).  This proves that
		$h\big(\ga_v(R)\big) \leq 1-\te^2 = 1/\big(\cosh (R/2)\big)^2$.   

		As to the left hand inequality, observe that $t^2+\mod{Z}^2 = 1- \mod{X}^2$, which reaches its maximum
		when $X=0$.  The summand $1-2t\te$ reaches its maximum when $t$ is $-1$.  So, if $\mod{X} = 0 = \mod{Z}$
		and $t=-1$, we have that $\chi = (1+\te)^2$.  Then
		$$
			h\big(\ga_v(R)\big)
			\geq \frac{1-\te^2}{(1+\te)^2} 
			= \frac{1-\te}{1+\te} 
			= \e^{-R},
		$$
		and the claim is proved. 

		In particular, notice that the height of the point $\ga_v(R)$ is not constant as~$v$ varies in
		the set of all points of the form $(X,Z,0)$, where $\mod{X}^2+\mod{Z}^2 = 1$.
	}
\end{remark}

For each $h$ in $(0,1)$, define 
\begin{equation} \label{f: al1 al2} 
	\al_1(h) := (1-h)^{1/2}
	\quad\hbox{and}\quad 
	\al_2(h) := (1-h^2)^{1/4}.
\end{equation}

\begin{proposition} \label{p: shadow on the boundary}
	Suppose that $h$ is in $(0,1)$.  Then every point ${ nh}$ in $\partial\big(T(e)\big) \cap \cH_{h}$ satisfies
	$$
	{ \al_1(h)
	\leq \cG(n)
	\leq \al_2(h).}
	$$
\end{proposition}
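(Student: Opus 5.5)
The plan is to parametrise $\partial\big(T(e)\big)$ explicitly with \eqref{f: geodesics S0}, compute both the height and the gauge of the $N$-component, and then compare them through an algebraic identity. Points of $\partial\big(T(e)\big)$ (other than $e$) are the points $\ga_v(\tau)$ with $\tau>0$ and $v=(X,Z,0)$, where $\mod{X}^2+\mod{Z}^2=1$. Write $a:=\mod{Z}^2\in[0,1]$, so that $\mod{X}^2=1-a$, and $\te=\tanh(\tau/2)\in(0,1)$. Since $t=0$, we have $\chi=1+a\te^2$. Hence $\ga_v(\tau)=nh$ with
$$
h=\frac{1-\te^2}{\chi},\qquad n=\Big(\frac{2\te}{\chi}X-\frac{2\te^2}{\chi}J_ZX,\ \frac{2\te}{\chi}Z\Big).
$$

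Next I compute $\cG(n)$. Because $\prodo{X}{J_ZX}=\prodo{Z}{[X,X]}=0$ by \eqref{f: def JZ}, and $\mod{J_ZX}^2=\mod{Z}^2\mod{X}^2$ by \eqref{f: polarisation}, the $\frv$-component has squared norm
$$
\frac{4\te^2(1-a)(1+a\te^2)}{\chi^2}=\frac{4\te^2(1-a)}{\chi}.
$$
Consequently
$$
\cG(n)^4=\frac{\te^4(1-a)^2+4\te^2a}{\chi^2}.
$$

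The key step is to rewrite this in terms of $h$. Expanding, one checks that
$$
\te^4(1-a)^2+4a\te^2=(1+a\te^2)^2-(1-\te^2)^2-2\te^2(1-a)(1-\te^2).
$$
Dividing by $\chi^2$ gives
$$
\cG(n)^4=1-h^2-\frac{2(1-a)\te^2h}{\chi}.
$$
Since the subtracted term is nonnegative, this yields at once $\cG(n)^4\le 1-h^2$, that is, $\cG(n)\le\al_2(h)$.

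For the lower bound I need $\cG(n)^4\ge(1-h)^2$, i.e. $1-h^2-(1-h)^2=2h(1-h)\ge 2h(1-a)\te^2/\chi$. From the formula for $h$ we get $1-h=(\chi-1+\te^2)/\chi=(1+a)\te^2/\chi$, so the required inequality reduces to $1-a\le 1+a$, which holds. This gives $\cG(n)\ge(1-h)^{1/2}=\al_1(h)$. As a sanity check, equality occurs at $a=0$ (the lower bound) and at $a=1$ (the upper bound).

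I expect the only real obstacle to be finding the identity above that expresses $\cG(n)^4$ through $h$. The alternative, solving for $\te$ in terms of $h$ and $a$, is messier. Everything else is routine bookkeeping with \eqref{f: def JZ} and \eqref{f: polarisation}.
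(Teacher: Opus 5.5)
Your proof is correct and follows essentially the paper's route: the same parametrisation of $\partial\big(T(e)\big)$ by geodesics with horizontal unit initial vectors, and the same computation of $\cG(n)^4$ via $\prodo{X}{J_ZX}=0$ and $\mod{J_ZX}^2=\mod{X}^2\mod{Z}^2$. The only difference is bookkeeping. The paper solves for $\te^2$ in terms of $h$ and $\mod{Z}$ and obtains $\cG(n)^4=(1-h)^2+4h(1-h)\mod{Z}^2/(1+\mod{Z}^2)$. Your identity reduces to exactly this expression once you substitute $\te^2/\chi=(1-h)/(1+\mod{Z}^2)$.
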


\begin{proof}
	{ By \eqref{f: geodesics S0}, if $v := (X,Z,0)$ is a ``horizontal'' vector in $\frs$ 
	such that $\mod{X}^2 + \mod{Z}^2 = 1$, then the points of the form}
	\begin{equation} \label{f: geodesics S}
		\ga_v(\tau)
		= \Big(\frac{2\te}{\chi}\, X - \frac{2\te^2}{\chi} \, J_ZX, \frac{2\te}{\chi}\, Z, \frac{1-\te^2}{\chi}\Big),
	\end{equation}
	where $\te = \tanh(\tau/2)$ and $\chi = 1+ \mod{Z}^2 \, \te^2$, are all the points in 
	$\partial \big(T(e)\big)$.  If~{ $\ga_v(\tau)$} belongs to $\cH_h$, then 
	$$
		\frac{1-\te^2}{\chi} = h,
	$$
	which yields
	$$
		\te^2 = \frac{1-h}{1+ h\mod{Z}^2}.
	$$
	Denote by $\tau_h$ the unique value of $\tau$ which solves this equation (given $h$ and $Z$).  By substituting
	this value of $\te$ in the definition of $\chi$, we see that 
	$$
		\chi 
		= 1 + \mod{Z}^2 \, \frac{1-h}{1+ h\mod{Z}^2}
		= \frac{1+ \mod{Z}^2}{1+h\mod{Z}^2}.  
	$$
	In order to get cleaner formulae, we set 
	$$
		a := \frac{\te}{\chi}
		\qquad\hbox{and}\qquad 
		b := \frac{\te^2}{\chi}.
	$$
	Of course $a$ and $b$ depend on $h$, as we think of $\te$ and $\chi$ evaluated at the point $\tau_h$.  However, 
	for simplicity, we do not stress this dependence in our notation.  Summing up, we have 
	$$
		\ga_v(\tau_h)
		:= \big(2a X - 2b  J_ZX, 2a Z, h\big).  
	$$
	Next, we observe that  
	\begin{equation} \label{f: cN I}
	\begin{aligned}
		\cG \big(2a X - 2b  J_ZX, 2a Z \big)^4
		& = \frac{1}{16} \, \bigmod{2a X - 2b  J_ZX}^4 + 4a^2\mod{Z}^2 \\  
		& = \big(a^2 \mod{X}^2 + b^2  \mod{J_ZX}^2\big)^2 + 4a^2\mod{Z}^2 \\  
		& = \mod{X}^4\, \big(a^2+ b^2  \mod{Z}^2\big)^2 + 4a^2\mod{Z}^2:
	\end{aligned}
	\end{equation}
	we have used that $\prodo{X}{J_ZX} = 0$ (see \eqref{f: def JZ}) and that $\mod{J_ZX}^2 = \mod{X}^2\, \mod{Z}^2$ 
	(see \eqref{f: polarisation}).  Now, recall that $\ga_v$ is a unit speed geodesic, so that  
	$$
	\mod{X}^4 
	= \mod{X}^2 \, \mod{X}^2
	= \big(1-\mod{Z}^2\big)^2.  
	$$
	Moreover, from the definitions of $a$ and $b$, we see that 
	$$
		a^2 
		= \frac{\te^2}{\chi^2}
		= \frac{(1-h)\, \big(1+h\mod{Z}^2\big)}{\big(1+\mod{Z}^2\big)^2}
		\quad\hbox{and} \quad
		b^2
		= \frac{(1-h)^2}{\big(1+ \mod{Z}^2\big)^2}.  
	$$
	We use these formulae and \eqref{f: cN I}, and obtain that 
	$$
	\begin{aligned}
		& \cG \big(2a X - 2b  J_ZX, 2a Z \big)^4 \\
		& = \big(1-\mod{Z}^2\big)^2 \, \Big[\frac{(1-h)\, \big(1+h\mod{Z}^2\big)}{\big(1+\mod{Z}^2\big)^2}
			+ \frac{(1-h)^2}{\big(1+ \mod{Z}^2\big)^2} \, \mod{Z}^2  \Big]^2 \\
		&  \qquad + 4 \, \frac{(1-h)\, \big(1+h\mod{Z}^2\big)}{\big(1+\mod{Z}^2\big)^2} \, \mod{Z}^2
			\\
		& = \big(1-\mod{Z}^2\big)^2 \, \frac{(1-h)^2}{\big(1+\mod{Z}^2\big)^2}
			+ 4 \, \frac{(1-h)\, \big(1+h\mod{Z}^2\big)}{\big(1+\mod{Z}^2\big)^2} \, \mod{Z}^2
			\\
		& = (1-h)^2 + 4 \,(1-h)\, h\,  \frac{\mod{Z}^2}{1+\mod{Z}^2}.
	\end{aligned}
	$$
	Since $\mod{Z}^2/\big(1+\mod{Z}^2\big) \leq 1/2$, we can conclude that 
	$$
		(1-h)^2 
		\leq \cG \big(2a X - 2b  J_ZX, 2a Z \big)^4
		\leq (1-h) \, (1+h),
	$$
	as required.
\end{proof}

For each positive number $R$ and each $h$ { in $A$}, define  
{ $$
\cB_R(e_0h)
:= \cB_R(e_0) \cdot h
$$
(recall that $e_0$ denotes the neutral element in $N$): here $\cdot$ denotes the product in $NA$.}
Clearly $\cB_R(e_0h)$ is ``a copy at height $h$ in $S$'' of the (open) ball { $\cB_R(e_0)$ in $N$.}


For each positive number $R$, set
$$
T_R(e)
:= T(e) \cap \Pi_{\e^{-R}}.  
$$
For each $h$ in $(0,1)$, 
define 
the sets
$$
\ucT_R(e)
:= \bigcup_{h\in (\e^{-R},1)} \, {\cB_{\al{ _1}(h)}(e_0h)} 
$$
and
$$
\ocT_R(e)
:= \bigcup_{h\in (\e^{-R},1)} \, {\cB_{{ \al_2}(h)}(e_0h)},
$$
where $\al_1$ and $\al_2$ are defined in \eqref{f: al1 al2}.
Denote by 
$C$ the Cayley type transform from $S$ to { $B_1(\frs)$ (defined in \eqref{f: B1 frs})}.  
See \cite[formula (1.12)]{ADY} for more information about $C$.  Straightforward calculations show that 
$C$ maps $T(e)$ onto 
$$
	\big\{(X,Z,t) \in \frs: \mod{X}^2+\mod{Z}^2 + t^2 < 1\,\,\hbox{and}\,\, t<0 \big\} {.}
$$
As a consequence, 
\begin{equation} \label{f: ucT ocT}
\ucT_R(e)
\subset T_R(e) 
\subset \ocT_R(e).  
\end{equation}
Other sets of interest will be the Riemannian ball $B_R(e)$, and the Riemannian half ball $b_R(e)$, defined by
$$
b_R(e)
:= B_R(e) \cap T(e).  
$$
Finally, we consider cylinders of the form
$$
\cC_R(e)
:= { \cB_{1}(e_0) \times \big(\e^{-R},\infty\big).}
$$
Notice the inclusion $b_R(e) \subset \cC_R(e)$.  Indeed, this follows by the definition of $T (e)$ 
and the fact that the ``south pole'' of $B_R(e)$ is $(0,0,\e^{-R})$.  Furthermore Proposition~\ref{p: shadow on the boundary} 
implies that
$$
T_R(e)
\subset \cC_R(e)
\quant R > 1.
$$

Owing to translation-invariance of the setting, half balls and cylinders centred at any point 
$n_0a_0$ in $NA$ are defined by translating the corresponding sets at the origin, namely
$$
b_R(n_0a_0):=n_0a_0\, b_R(e),
$$
and
$$
\cC_R(n_0a_0):=n_0 a_0 \,\cC_R(e){ .}
$$
{ Observe that a point in $\cC_R(n_0a_0)$ is of the form 
$$
n_0a_0na
= n_0 n^{a_0} a_0 a,
$$
where $na$ belongs to $\cC_R(e)$ and $n^{a_0}$ is short for $a_0 n a_0^{-1}$.  Notice that, by the formula
above \eqref{f: Haar NA}, 
$\cG\big(  (n^{a_0})^{-1} \big) = \sqrt{a_0} \,\, \cG(n)$.   Hence
$$
d_N(n_0 n^{a_0}, n_0)
= \cG\big( (n_0 n^{a_0})^{-1} n_0 \big)
= \cG\big(  (n^{a_0})^{-1} \big)
= \sqrt{a_0} \,\, \cG(n)
< \sqrt{a_0}.  
$$
{ Also $a_0a>a_0 \, \e^{-R}$, so that}
\begin{equation}\label{eq:cylinder_n0a0}
\cC_R(n_0a_0)
= \cB_{\sqrt{a_0}}(n_0) \times \big(a_0\, \e^{-R},\infty\big).
\end{equation}
}
We set $T_R(n_0a_0):=n_0a_0\, T_R(e)$. 

We denote by $\fb$, $\fT$ and $\fC$ the families of all half balls, trigona and cylinders,
and by $\fb_\infty$, $\fT_\infty$ and $\fC_\infty$ the subfamilies thereof consisting of all elements associated
to parameters $R>1$.  

\begin{definition} \label{def: cC admissible}
	\rm{We call a cylinder $\cC_R(n_0a_0)$ \emph{admissible} if $\log a_0=j$ { is an integer}, and the radius~$R$ 
	is a nonnegative integer.   Observe that by \eqref{eq:cylinder_n0a0}, the ``base" of $\cC_R(n_0a_0)$ is contained 
	in the horocycle~$\cH_{\e^{j-R}}$.  We denote by~$\fC_\infty'$ the family of all admissible cylinders in $\fC_\infty$.
	}
\end{definition}

We need the following preliminary result. 

\begin{proposition} \label{p: properties setsNA}
	We have that 
	\begin{equation} \label{f: area of triangoloidNA}
	\left. \begin{aligned}
		&\bigmod{B_R(x)}  
		\\
		&\bigmod{b_R(x)}  
		\\
		&\bigmod{T_R(x)}  
		\\
		&\bigmod{\cC_R(x)}  
	\end{aligned}
		\right\}
		\asymp \e^{\nu R}
		\quant R>1 \quant x \in S. 
	\end{equation}
\end{proposition}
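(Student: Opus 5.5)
By left-invariance of $d$ and $\mu$, and since every set in the statement is defined as a left translate of the corresponding set at $e$, it suffices to take $x=e$. The plan is to sandwich the sets between one another:
$$
\ucT_R(e)\subset T_R(e)\subset \cC_R(e), \qquad b_R(e)\subset \cC_R(e)\cap B_R(e) .
$$
It then suffices to obtain an upper bound $\bigmod{B_R(e)}\le C\e^{\nu R}$, an upper bound $\bigmod{\cC_R(e)}\le C\e^{\nu R}$, and lower bounds for $\ucT_R(e)$ and for $b_R(e)$.

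For the cylinder we compute directly with the Haar measure \eqref{f: Haar NA}:
$$
\bigmod{\cC_R(e)}=\bigmod{\cB_1(e_0)}_N\int_{\e^{-R}}^\infty s^{-\nu-1}\wrt s=\frac{\bigmod{\cB_1(e_0)}_N}{\nu}\,\e^{\nu R}.
$$
This is exact, so $\bigmod{\cC_R(e)}\asymp \e^{\nu R}$, and it gives the upper bounds for $T_R(e)$ and $b_R(e)$.

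For the ball, the asymptotic $\bigmod{B_R(e)}\asymp \e^{2\nu R}$? No: the volume density in geodesic polar coordinates on a Damek--Ricci space is $A(r)=c\,(\sinh(r/2))^{p+q}(\cosh(r/2))^{q}$, which grows like $\e^{\nu r}$ since $(p+q)/2+q/2=\nu$. Hence $\bigmod{B_R(e)}\asymp \e^{\nu R}$ for $R>1$; this is standard (see \cite{ADY}) and I would simply quote it. It yields the upper bound for $b_R(e)$.

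For the lower bounds, note that $\ucT_R(e)\supset \bigcup_{h\in(\e^{-R},1/2)}\cB_{\al_1(h)}(e_0h)$, and $\al_1(h)\ge 2^{-1/2}$ on this range. The slice at height $h$ therefore contains a fixed-size Cygan ball $\cB_{2^{-1/2}}(e_0)\cdot h$. Integrating,
$$
\bigmod{T_R(e)}\ge\bigmod{\cB_{2^{-1/2}}(e_0)}_N\int_{\e^{-R}}^{1/2}s^{-\nu-1}\wrt s\ge C\,\e^{\nu R}\qquad (R>1),
$$
where $R>1$ guarantees that the interval of integration is nonempty and that its lower endpoint dominates.

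For $b_R(e)$, the main obstacle is that $B_R(e)$ does not contain the whole trigon. I would use the analogue of Proposition~\ref{p: inclusions T and b}~\rmi, namely $T_{R-c}(e)\subset b_R(e)$ for a fixed constant $c$. To check it, take $w=nh\in \ucT_{R-c}(e)$ with $h\in(\e^{-R+c},1)$ and $\cG(n)<1$. By the triangle inequality, $d(e,w)\le d(e,e_0h)+d(e_0h,nh)$. The first term equals $\log(1/h)<R-c$ by \eqref{eq:vertical_dist}. The second is the distance within the horocycle $\cH_h$ between points whose $N$-coordinates differ by Cygan gauge $<1$; left translation by $h^{-1}$ turns it into a distance between points at height $1$ whose gauge difference is dilated by $h^{-1/2}$. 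By the distance formula from \cite{ADY}, this is at most $2\log(1+C\cG/\sqrt h)\le \log(1/h)+C$. That bound is too crude, so instead I would avoid this route and shrink: restrict to heights $h\in(\e^{-R/2+c'},1/2)$, where $d(e,nh)\le \log(1/h)+\log(1/h)+C< R$. This still gives the lower bound $\bigmod{b_R(e)}\ge C\e^{\nu R/2}$, which is not enough.

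The better approach is the exact formula for $r(X,Z,s)$. For $\cG(n)\le \al_1(h)$-scale points at height $s\le 1$, one has $\cosh^2(r/2)\le(\cosh(\log\sqrt s)+C/\sqrt s)^2+C/s\le C'/s$. This gives $r\le\log(1/s)+C''$. So points of $\ucT_R(e)$ at heights $s>\e^{-R+C''}$ lie in $B_R(e)\cap T(e)=b_R(e)$, and hence
$$
\bigmod{b_R(e)}\ge \bigmod{\ucT_{R-C''}(e)}\ge C\e^{\nu R}.
$$
Verifying this distance bound carefully is the key step.
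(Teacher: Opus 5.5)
Your argument is correct. For $B_R$, $\cC_R$ and $T_R$ it is essentially the paper's proof: the ball volume is quoted, the cylinder is computed exactly, and the trigon is squeezed between $\ucT_R(e)$ (restricted to heights where $\al_1$ is bounded below) and the cylinder. The paper uses $\ocT_R(e)$ for the upper bound, which amounts to the same thing since $\al_2<1$.

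The genuine difference is the lower bound for the half ball.

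\textbf{The paper's route.} It obtains the exact identity $\bigmod{b_R(e)}=\frac12\bigmod{B_R(e)}$ from the ball model. The Cayley transform maps $b_R(e)$ onto the lower half of the Euclidean ball of radius $\tanh(R/2)$, and the density of $\mu$ there is radial.

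\textbf{Your route.} You use the formula for $\cosh^2(r/2)$ stated just before \eqref{eq:vertical_dist}. If $\cG(X,Z)<1$ and $s<1$, then $\mod{X}^2<4$ and $\mod{Z}^2<1$. Hence $\cosh^2(r/2)<5/(2s)$, and so $r<\log(1/s)+\log 10$. This gives $\ucT_{R-\log 10}(e)\subset T(e)\cap B_R(e)=b_R(e)$, which reduces the half-ball lower bound to the trigon estimate.

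\textbf{Comparison.} Your route gives only comparability, with a worse constant. It also still needs $\ucT_R(e)\subset T(e)$, which the paper derives from the ball model anyway. On the other hand, it avoids the radiality of the density and the computation of the Cayley image of $b_R(e)$. It is in effect a direct, quantitative proof of a version of the inclusion $T_{R-c}(z)\subset b_R(z)$ from Proposition~\ref{p: inclusions T and bNA}~(i). In the paper that inclusion comes later and is derived from Remark~\ref{rem: ball boundary I}.

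\textbf{Two points for the write-up.} First, the trigon bound applied at $R-\log 10$ only covers $R>1+\log 10$. For $R\in(1,1+\log 10]$, add that $\bigmod{b_R(e)}\ge\bigmod{b_1(e)}>0$ while $\e^{\nu R}$ is bounded on that range. Second, remove the false starts: the $\e^{2\nu R}$ slip and the abandoned triangle-inequality route.
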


\begin{proof}
	Since the Riemannian measure is translation-invariant, it suffices to consider sets ``centred'' at $e$.  

	The result for the Riemannian ball $B_R(e)$ is well known (see, e.g., \cite[Eq. (1.18)]{ADY}).
	Straightforward calculations show that the Cayley transform~$C$ maps $b_R(e)$ onto 
	$$
		\big\{(X,Z,t) \in \frs: \mod{X}^2+\mod{Z}^2 + t^2 < \big(\tanh (R/2)\big)^2\,\,\hbox{and}\,\, t<0 \big\},
	$$
	Since the density of the measure on $B(\frs)$ is radial (see \cite[Formula~(1.16)]{ADY}), it is clear that
	the measure of $C\big(b_R(e)\big)$ is one half of the measure of $C\big(B_R(e)\big)$.  Then $|b_R(e)| = (1/2) \, |B_R(e)|$,
	because $C$ is an isometry.  

	For cylinders, we have
	\[
		|\cC_R(e)|
		= \int_{\mathcal{B}_1(e_0)}\!\!\wrt n\, \int_{\e^{-R}}^{\infty} a^{-\nu -1}\, \wrt a 
		= \frac{\om}{\nu}\, \e^{\nu R},
	\]
	{ where $\om$ denotes the measure of $\cB_1(e_0)$ (with respect to the measure on~$N$).} 

  	The inclusions \eqref{f: ucT ocT} yield
	$$
		\bigmod{\ucT_R(e)}
		\leq \bigmod{T_R(e)}
		\leq \bigmod{\ocT_R(e)}.  
	$$
	{ Recall that $\al_1(h) = (1-h)^{1/2}$ and $\al_2(h) = (1-h^2)^{1/4}$, and observe that}
	$$
	\begin{aligned}
		|\ucT_R(e)|
		& =     { \int_{\e^{-R}}^{1} \, \bigmod{\cB_{\al_1(h)}(e_0)} \, \frac{\wrt h}{h^{\nu+1}} } \\
		& =     \om\, \int_{\e^{-R}}^{1}\frac{\al_1(h)^\nu}{h^{\nu+1}} \wrt h \\
		& \geq  \om\, (1-\e^{ {-1/2}})^{\nu/2}  \, \int_{\e^{-R}}^{\e^{-1/2}}\frac{1}{h^{\nu+1}} \wrt h \\
		& =     \om\, (1-\e^{ {-1/2}})^{\nu/2}\, \,\frac{\e^{\nu R}-\e^{\nu/2}}{\nu}
		\quant R>1.
	\end{aligned}
	$$
	Similarly, 
	$$
	\begin{aligned}
		|\ocT_R(e)|
			& =     \int_{\e^{-R}}^{1} \, { \bigmod{\cB_{\al_2(h)}(e_0)} \,\frac{\wrt h}{h^{\nu+1}}} \\
			& =     \om \, \int_{\e^{-R}}^{1}\frac{{ \al_2}(h)^\nu}{h^{\nu+1}} \wrt h \\
			& \leq  \om \, \frac{\e^{\nu R}-1}{\nu}
			\quant R>1;
	\end{aligned}
	$$
	we have used the fact that $\al_2(h) < 1$ in the inequality above.  Therefore
	\begin{equation} \label{f: est volume trigonon}
		\bigmod{T_R(e)}
		\asymp \e^{\nu R} \quant R>1,
	\end{equation}
	as required. 
\end{proof}

For further reference, note that the proof of Proposition~\ref{p: properties setsNA} implies that 
\begin{equation} \label{f: trigonon more}
	c_\nu \, \e^{\nu R} 
	\leq \bigmod{T_R (e)}
	\leq C_\nu \, \e^{\nu R} 
	\quant R>1,
\end{equation}
where 
$$
{ c_\nu
:= \frac{\om}{\nu} \, \big(1-\e^{-1/2}\big)^{\nu/2}\, (1-\e^{-\nu/2})}
\quad\hbox{and}\quad
C_\nu
:= { \frac{\om}{\nu}}.
$$

\subsection{Reduction of the problem}

In this subsection we compare $\cN^{\frb_\infty}$ with $\cN^{\fT_\infty}$ 
and with $\cN^{\fC_\infty'}$.  Proposition~\ref{p: inclusions T and bNA}~\rmii\ will 
be used in the proof of Theorem~\ref{t: main}~\rmi-\rmii, and Proposition~\ref{p: admissibleNA} will play a role in the proof
of Theorem~\ref{t: main}~\rmiii. 

The next two propositions are the analogue for general Damek--Ricci spaces of Proposition~\ref{p: inclusions T and b} 
and Proposition~\ref{p: admissible} for the  {hyperbolic} upper half plane. 

\begin{proposition} \label{p: inclusions T and bNA}
	The following hold:
	\begin{enumerate}
		\item[\itemno1]
		$\ds
		T_{R-2\log 2}(z) \subset b_R(z) \subset T_R(z) \subset b_{R+2\log 2}(z) 
		$
		for every $z$ in $S$ and every  {$R > 2$};
		 
		\item[\itemno2]
		there exist positive constants $c$ and $C$ such that 
		$$
		c\, \cN^{\fT_\infty}\! f
		\leq \cN^{\fb_\infty}\! f 
		\leq C\, \cN^{\fT_\infty}\! f.   
		$$
	\end{enumerate}
\end{proposition}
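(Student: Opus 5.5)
By left-invariance of $d$, of the Riemannian measure and of the families $\fb$, $\fT$ (all are defined by left translation from the sets at $e$), it suffices to prove \rmi\ for $z=e$. The plan is to compare, inside $T(e)$, the Riemannian sphere of radius $R$ with horocycles. A point $w=\ga_v(\tau)$ of $T(e)$, with $v=(X,Z,t)$ a unit vector and $t\leq 0$, lies in $b_R(e)$ if and only if $\tau<R$. It lies in $T_R(e)$ if and only if $h(w)>\e^{-R}$.

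\textbf{Middle inclusion.} Remark~\ref{rem: ball boundary I} gives $h(\ga_v(\tau))\geq \e^{-\tau}$ for every admissible $v$. Hence $\tau<R$ forces $h>\e^{-R}$, so $b_R(e)\subset T_R(e)$.

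\textbf{First and third inclusions.} Both follow from a single claim: there is $c=2\log 2$ such that $w\in T(e)$ with $h(w)>\e^{-R}$ satisfies $d(e,w)<R+c$. Given the claim, $T_R(e)\subset b_{R+2\log 2}(e)$ immediately. Applying it with $R-2\log 2$ in place of $R$ gives $T_{R-2\log2}(e)\subset b_R(e)$; the hypothesis $R>2$ ensures that $R-2\log 2$ is positive.

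To prove the claim, I would use the triangle inequality through the vertical projection. Write $w=na$ with $a>\e^{-R}$, so that
$$
d(e,w)\leq d(e,a)+d(a,na)=\mod{\log a}+d(a,na).
$$
Since $w\in T(e)$ forces $a<1$ (by the Cayley picture, or by the upper bound in Remark~\ref{rem: ball boundary I}), we have $\mod{\log a}<R$. For the second term, $d(a,na)=r(a^{-1}na)=r(n^{a^{-1}})$, and in the $(X,Z,1)$ coordinates $a^{-1}na$ has entries $X/\sqrt a$ and $Z/a$. The distance formula before \eqref{eq:vertical_dist}, with $s=1$, gives
$$
\cosh^2(r/2)=\Big(1+\tfrac{\mod{X}^2}{8a}\Big)^2+\tfrac{\mod{Z}^2}{4a^2}.
$$
The inclusion \eqref{f: ucT ocT} together with Proposition~\ref{p: shadow on the boundary} gives $\cG(n)<1$, hence $\mod{X}^2<4$ and $\mod{Z}<1$. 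Therefore $\cosh^2(r/2)\leq (1+1/(2a))^2+1/(4a^2)\leq C/a^2$ for $a<1$. This yields $r\leq \log(1/a)+c'$, and the total is $2\log(1/a)+c'$. That is too weak: it is $2R$, not $R+c$.

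\textbf{Main obstacle.} The naive path through $a$ loses a factor $2$, so the splitting has to be done differently. I would instead go through the point $a'=\sqrt a$, i.e.\ climb halfway. Equivalently, I would use the exact formula directly:
$$
\cosh^2\big(d(e,w)/2\big)=\big(\cosh(\log\sqrt a)+\mod{X}^2/(8\sqrt a)\big)^2+\mod{Z}^2/(4a).
$$
Using $\mod{X}^2<4$, $\mod{Z}<1$ and $a<1$, each term is at most a constant times $1/a$, more precisely at most $4/a$, where $\cosh(\log\sqrt a)\leq 1/\sqrt a$. This gives $\cosh(d/2)\leq 2/\sqrt a$, so $\e^{d/2}\leq 4/\sqrt a$ and
$$
d(e,w)\leq \log(1/a)+2\log 2<R+2\log 2.
$$
The constant bookkeeping is where care is needed; if the constant comes out slightly larger, I would sharpen the bounds on $\mod{X}$ and $\mod{Z}$ using the precise shape of $\ocT$ from $\al_2$.

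\textbf{Part \rmii.} Take $x\in b_R(z)\in\fb_\infty$. For the second inequality in \rmii: if $R>2$, then $b_R(z)\subset T_R(z)$ with $\mod{T_R}\asymp\mod{b_R}$ by Proposition~\ref{p: properties setsNA}, so the average of $\mod{f}$ over $b_R(z)$ is at most $C$ times its average over $T_R(z)$. If $1<R\leq 2$, then $b_R(z)\subset T_R(z)$ still holds by the middle inclusion, which needs only $R>0$, and the volumes are comparable. Conversely, for $T_R(z)\ni x$ with $R>1$, we have $T_R(z)\subset b_{R+2\log2}(z)$ by the claim, which holds for all $R>0$, and again the volumes are comparable. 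This gives $c\,\cN^{\fT_\infty}\!f\leq\cN^{\fb_\infty}\!f$.
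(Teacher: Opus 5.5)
You prove the middle inclusion correctly, and your part~(ii) is fine. Part~(ii) only needs $T_R(z)\subset b_{R+c}(z)$ for some constant $c$, and comparing volumes directly through Proposition~\ref{p: properties setsNA} also covers $1<R\le 2$ cleanly.

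The gap is in the outer inclusions of part~(i), which assert the specific constant $2\log 2$; your argument does not give it. First, the final arithmetic is off: $\cosh(d/2)\le 2/\sqrt a$ yields $\e^{d/2}\le 4/\sqrt a$, i.e.\ $d\le\log(1/a)+4\log 2$. More importantly, the repair you suggest cannot work. Sharper bounds on $\mod{X}$, $\mod{Z}$ from $\al_2$ only use $T_R(e)\subset\ocT_R(e)$, and $\ocT_R(e)$ contains points that are too far from $e$. For $w=na$ with $n=(X,Z)$, the distance formula gives $4a\cosh^2(d(e,w)/2)=(1+a+\mod{X}^2/4)^2+\mod{Z}^2$. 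On the slice $\cG(n)<\al_2(a)$ the supremum of this quantity is $2(1+a)\big(1+\sqrt{1-a^2}\big)$, approached at $Z=0$. By contrast, $d(e,w)=\log(1/a)+2\log 2$ corresponds to the value $(2+a/2)^2$, and the former is larger whenever $a<4/5$. Taking $a$ slightly above $\e^{-R}$ therefore gives points of $\ocT_R(e)$ at distance greater than $R+2\log 2$ from $e$. So any argument routed through $\ocT$, or through $\cG(n)<1$, necessarily loses an additive constant.

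The missing idea is to use the geodesic description of $T(e)$ instead of coordinate bounds. The upper estimate in Remark~\ref{rem: ball boundary I} holds for every $\tau>0$: $h(\ga_v(\tau))\le 1/\cosh^2(\tau/2)<4\e^{-\tau}$ for every unit $v=(X,Z,t)$ with $t\le 0$. Suppose $w=\ga_v(\tau)\in T(e)$ with $\tau=d(e,w)$ and $h(w)>\e^{-R}$. Then $\e^{-R}<4\e^{-\tau}$, i.e.\ $\tau<R+2\log 2$, which is $T_R(e)\subset b_{R+2\log 2}(e)$. Replacing $R$ by $R-2\log 2$ gives $T_{R-2\log 2}(e)\subset b_R(e)$. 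This is exactly the paper's argument, phrased there as: the points of $\partial b_R(e)$ lying on the sphere have height at most $1/\cosh^2(R/2)\le\e^{2\log 2-R}$. You already quote this upper bound to get $a<1$; applying it at the parameter $\tau=d(e,w)$ is all that is needed.
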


\begin{proof}
	First we prove \rmi.  By translation-invariance of the metric, it suffices to prove the result for $z = e$.
	By definition of $b_R(e)$ and $T_R(e)$, it is clear that $b_R(z) \subset T_R(z)$.  The inclusion
	$T_R(z) \subset b_{R+2\log 2}(z)$ will follow from $T_{R-2\log 2}(z) \subset b_R(z)$ simply 
	by replacing $R$ with $R+2\log 2$.

	So it remains to prove that $T_{R-2\log 2}(z) \subset b_R(z)$.  By Remark~\ref{rem: ball boundary I}, each of the points in 
	$$
	\partial \big(b_R(e)\big) \setminus T(e)
	$$
	has height at   {most} $1/\big(\cosh(R/2)\big)^2$.  It is straightforward to check that 
	$$
		\frac{1}{\big(\cosh(R/2)\big)^2}
		\leq \e^{2\log 2-R};
	$$
	the required containment follows directly from this.  

	Next we prove \rmii.  On the one hand, \rmi\ and~ {Proposition~\ref{p: properties setsNA}}
	imply the following:
	$$
	\begin{aligned}
		\frac{1}{\mod{T_R(w)}} \, \int_{T_R(w)} \mod{f} \wrt \mu
		& \leq \frac{1}{\mod{b_R(w)}} \, \int_{b_{R+2\log 2}(w)} \mod{f} \wrt \mu \\
		& \leq \frac{C}{\mod{b_{R+2\log 2}(w)}} \, \int_{b_{R+2\log 2}(w)} \mod{f} \wrt \mu,
	\end{aligned}
	$$
	so that the left inequality in \rmii\ holds.  Similarly
	$$
	\begin{aligned}
		\frac{1}{\mod{b_R(w)}} \, \int_{b_{R}(w)} \mod{f} \wrt \mu 
		& \leq \frac{1}{\mod{T_{R-2\log 2}(w)}} \, \int_{T_{R}(w)} \mod{f} \wrt \mu \\
		& \leq \frac{C}{\mod{T_R(w)}} \, \int_{T_{R}(w)} \mod{f} \wrt \mu,
	\end{aligned}
	$$
	which proves the right inequality in \rmii.
\end{proof}

\begin{proposition} \label{p: admissibleNA}
	There exist positive constants $c$ and $C$ such that the following hold:
	\begin{enumerate}
		\item[\itemno1]
		for every $n_0 a_0$ in $S$ and $R$ in $(1,\infty)$ {,} the admissible cylinder
		$\cC_K(n_0a)$, where $K=\lceil R \rceil+2$ and $\log a=\lceil \log a_0 \rceil$,
		contains $\cC_R(n_0a_0)$ and satisfies
		\begin{equation} \label{f: rel cyl}
			c
			\leq \frac{\mod{\cC_K(n_0 a)}}{\mod{\cC_R(n_0 a_0)}}
			\leq C;
		\end{equation}
		\item[\itemno2]
		for every measurable function $f$ the following inequality holds
		\begin{equation} \label{f: inequality max adm}
			\cN^{\frb_\infty}\! f 
			\leq C\,\, \cN^{\fC_\infty'}\! f.
		\end{equation}
	\end{enumerate}
\end{proposition}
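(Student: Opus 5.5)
Part (i) is a direct computation with the explicit description \eqref{eq:cylinder_n0a0}, namely $\cC_R(n_0a_0) = \cB_{\sqrt{a_0}}(n_0)\times(a_0\e^{-R},\infty)$. With $\log a = \lceil \log a_0\rceil$ we have $a_0\le a< \e\, a_0$, and with $K=\lceil R\rceil+2$ we get
$$
a\,\e^{-K} < \e\, a_0\, \e^{-R-2} < a_0\,\e^{-R}.
$$
So the base of $\cC_K(n_0a)$ lies below the base of $\cC_R(n_0a_0)$. Since $\sqrt{a}\ge\sqrt{a_0}$, we also have $\cB_{\sqrt{a_0}}(n_0)\subset\cB_{\sqrt a}(n_0)$, and hence $\cC_R(n_0a_0)\subset\cC_K(n_0a)$. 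This inclusion gives the lower bound in \eqref{f: rel cyl} with $c=1$. For the upper bound, left invariance of the measure and Proposition~\ref{p: properties setsNA}, or more precisely the exact formula $\mod{\cC_R(x)}=(\om/\nu)\,\e^{\nu R}$ from its proof, give
$$
\frac{\mod{\cC_K(n_0a)}}{\mod{\cC_R(n_0a_0)}} = \e^{\nu(K-R)} \le \e^{3\nu}.
$$
Finally, $\cC_K(n_0a)$ is admissible because $\log a$ and $K$ are integers, and $K>1$, so it belongs to $\fC_\infty'$.

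For part (ii), fix $x$ and a half ball $b_R(w)$ with $R>1$ that contains $x$. Write $w=n_0a_0$. Translating the inclusion $b_R(e)\subset\cC_R(e)$ noted before Proposition~\ref{p: properties setsNA} gives $b_R(w)\subset\cC_R(w)$. By part (i), $\cC_R(w)\subset\cC_K(n_0a)=:\cC'$, which is an admissible cylinder containing $x$. Then
$$
\frac{1}{\mod{b_R(w)}}\int_{b_R(w)}\mod f\wrt\mu
\le \frac{\mod{\cC'}}{\mod{b_R(w)}}\cdot\frac{1}{\mod{\cC'}}\int_{\cC'}\mod f\wrt\mu .
$$
The ratio $\mod{\cC'}/\mod{b_R(w)}$ is bounded by a constant $C$ independent of $R>1$ and $w$. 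Indeed, $\mod{\cC'}\le C\mod{\cC_R(w)}$ by (i), and $\mod{\cC_R(w)}\asymp \e^{\nu R}\asymp\mod{b_R(w)}$ by Proposition~\ref{p: properties setsNA}. Taking the supremum over all such half balls gives \eqref{f: inequality max adm}.

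The only step needing care is the inclusion $b_R(e)\subset\cC_R(e)$. It relies on Proposition~\ref{p: shadow on the boundary}, which shows that the lateral boundary of $T(e)$ at heights below $1$ has gauge at most $\al_2(h)<1$. One must also control the portion of $b_R(e)$ at heights above $1$ to confirm that its $N$-projection stays in $\cB_1(e_0)$. If the paper's remark only covers $T_R(e)$, I would instead use $b_R(e)\subset T_R(e)$ together with the stated inclusion $T_R(e)\subset\cC_R(e)$ for $R>1$. Points of $T(e)$ have height less than $1$, since the downward geodesics in Remark~\ref{rem: ball boundary I} have height at most $1/\cosh^2(R/2)$, so this route gives the inclusion with no extra work.
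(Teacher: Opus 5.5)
Your proof is correct and follows essentially the same route as the paper. The paper also gets the containment from the explicit description \eqref{eq:cylinder_n0a0} together with $a_0\le a<\e\,a_0$ and $K\ge R+2$ (it first left-translates by $(n_0a_0)^{-1}$, which is cosmetic), the volume comparison from $\mod{\cC_R(x)}\asymp\e^{\nu R}$, and part (ii) from $b_R(w)\subset\cC_R(w)\subset\cC_K(n_0a)$ plus Proposition~\ref{p: properties setsNA}; your justification of $b_R(e)\subset\cC_R(e)$ through $b_R(e)\subset T_R(e)\subset\cC_R(e)$ uses the same facts the paper invokes for that inclusion.
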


\begin{proof} 
	First we prove \rmi.  { The translation-invariance of the measure, 
	Proposition~\ref{p: properties setsNA} and the fact that $R+2\leq K\leq R+3$ imply that} 
	$$
	\mod{\cC_R(n_0 a_0)}
	= \mod{\cC_R(e)}
	\asymp \e^{\nu R}
	\asymp\mod{\cC_K(e)},
	$$  
	and \eqref{f: rel cyl} is proved. 

	It remains to prove that $\cC_K(n_0a)$ contains  {$\cC_R(n_0a_0)$.} Multiplying on the left by $(n_0a_0)^{-1}$, 
	we see that it is equivalent to having 
	\[
		\cC_R(e)\subset (0,0,a_0^{-1}a)\cdot\cC_K(e),
	\]
	which, by \eqref{eq:cylinder_n0a0}, is true if and only if
	\[
		 {a_0 \leq a,} \quad \e^{-R} > \frac{a}{a_0}\e^{-K}.
	\]
	The first inequality is immediate since $\log a=\lceil \log a_0 \rceil$, and in fact implies that  {
	$1\leq a/a_0 < \e$}; it follows that the second inequality is also true, since 
	 { $\e^{-R-3}< \e^{-K}\leq \e^{-R-2}$.}

	Next we prove \rmii.  
	Observe that, given a point $x$ in $S$ and a half ball $b_R(y)$ containing~ {$x$, the cylinder $\cC_R(y)$
	contains $b_R(y)$, hence $x$.}  By \rmi\ there exists an admissible cylinder $\cC$ containing $\cC_R(y)$, with
	$\mod{\cC} \leq C \, \mod{\cC_R(y)}$.   Notice that 
	$$
	\begin{aligned}
		\frac{1}{\mod{b_R(y)}} \, \int_{b_R(y)} \mod{f} \wrt \mu
		& \leq \frac{\mod{\cC_R(y)}}{\mod{b_R(y)}} \,\frac{\mod{\cC}}{\mod{\cC_R(y)}} \,\frac{1}{\mod{\cC}} \, 
			\int_{\cC} \mod{f} \wrt \mu \\
		& \leq \frac{C}{\mod{\cC}} \, \int_{\cC} \mod{f} \wrt \mu;
	\end{aligned}
	$$
	we have used Proposition~\ref{p: properties setsNA} in the last inequality.  
	Taking the supremum of both sides with respect to all half balls containing~$x$ we obtain the required estimate. 
\end{proof}

For any cylinder $\cC$ in $\fC_\infty$ we denote by ${ \be} (\cC)$ its base.
For each nonnegative integer $k$, consider the $k$-th ``horizontal'' slice of a cylinder in $\fC_\infty$ 
centred at the origin, given by: 
$$
\beta_k \big(\cC_R(e)\big)
:=\Big\{(X, Z, a) \in \cC_R(e): k<\log \frac{a}{\e^{-R}} \leq k+1\Big\}.
$$
Notice that for any nonnegative integer $k$ and every number $R>1$
\begin{equation}\label{eq:slices NA}
	\e^{k \nu } \, \bigmod{\beta_k(\cC_R(e))} 
	= \bigmod{\beta_0(\cC_R(e))} 
	\quad \text{and} \quad
	\mod{\cC_R(e)} 
	= \frac{1}{1-\e^{-\nu }}\,  \bigmod{\beta_0(\cC_R(e))}.
\end{equation}
Indeed, we have
\begin{align*}
	\mod{\beta_k(\cC_R(e))}
	&=\int_{\cG (X, { Z}) \leq 1} {\wrt X}{\wrt Z} \int_{\e^{-R+k}}^{\e^{-R+k+1}} a^{-\nu -1} \wrt a \\
	&=\e^{-k\nu } \, \int_{\cG (X, { Z}) \leq 1} {\wrt X}{\wrt Z}\, \frac{1}{\nu }(1-\e^{-\nu })\, \e^{\nu R} \\
	&= \e^{-k\nu} \, \bigmod{\be_0 \big(\cC_R(e)\big)},
\end{align*}
which is the first equality in \eqref{eq:slices NA}. Summing it over $k$ gives the second one.

Let us now consider ``horizontal'' slices for cylinders centred at any point $n_0a_0$ (recall the latter 
are described in \eqref{eq:cylinder_n0a0}); the slices are given by
\begin{equation}\label{eq:k-th slice}
\beta_k\big(\cC_R(n_0 a_0)\big)
	:= \Big\{n a \in \cC_R(n_0 a_0):\,  k<\log \frac{a}{a_0 \e^{-R}} \leq  k+1\Big\},
\end{equation}
still satisfying the properties in \eqref{eq:slices NA}.

\section{Proof of Theorem~\ref{t: main}~\rmi-\rmii} \label{s: The upper half-plane}

\subsection{Main estimate}

In the sequel we shall write $\cU$ instead of $\cN^{\fC_\infty'}$ for simplicity.
Suppose that $\al>0$, and consider, for every $f$ in $\lu{S}$, the level set 
$$
E_{\cU\! f}(\al) := \big\{x\in S: \cU\! f (x) > \al \big\}.
$$
We shall often write $E(\al)$ instead of $E_{\cU\! f}(\al)$.

From Proposition~\ref{p: admissibleNA}~\rmii\ we know that there exists a constant $C$ such that 
$$
\cN^{\frb_\infty}\! f 
\leq C\, \cU\! f.  
$$

\begin{theorem} \label{t: main cU}
	The following hold:
	\begin{enumerate}
		\item[\itemno1]
			there exists a positive constant {$C$} such that 
			$$
				\bigmod{\{x\in S: \cU\! f(x)>\al \}}
				\leq {C} \, \int_{S} \, \frac{\mod{f}}{\al} \, \log\Big({2} 
					+ \frac{\mod{f}}{{\al}} \Big) \wrt \mu
			$$
			for every $\al >0$ and every $f$ {for which the right hand side is finite}; 
		\item[\itemno2]
			$\cU$ is bounded on $\lp{S}$ for every $p$ in $(1,\infty]$.
	\end{enumerate}
\end{theorem}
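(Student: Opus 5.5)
We follow the Córdoba--Fefferman geometric approach outlined in the Introduction. We work with the admissible cylinders $\fC_\infty'$ and reduce both statements to a covering lemma of the following type. Given a finite family $\fG\subset\fC_\infty'$ with union $G$, we extract a subfamily $\fG'$ with three properties:
\begin{enumerate}
\item[(a)] $\bigmod{\bigcup_{\cC\in\fG}\cC}\le C\sum_{\cC\in\fG'}\mod{\cC}$;
\item[(b)] $\Bignorm{\sum_{\cC\in\fG'}\One_\cC}{q}\le A_q\,\bigmod{\bigcup_{\cC\in\fG'}\cC}^{1/q}$ for every $q<\infty$, with $A_q$ of order $q$;
\item[(c)] the overlap function $\sum_{\cC\in\fG'}\One_\cC$ satisfies an exponential integrability bound, namely $\int_{G'}\big(\e^{\epsilon\sum\One_\cC}-1\big)\wrt\mu\le C\,\mod{G'}$, where $G'=\bigcup_{\cC\in\fG'}\cC$.
\end{enumerate}
Property (c) is the dual of $L\log L$. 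Given these, the standard duality argument proves both parts. Take $\fG$ to be admissible cylinders $\cC$ with $\int_\cC\mod f>\al\mod\cC$ covering a compact subset of $E(\al)$. Then
$$
\mod{E(\al)}\lesssim\sum_{\fG'}\mod\cC\le\al^{-1}\int\mod f\sum_{\fG'}\One_\cC .
$$
For (ii) we apply Hölder's inequality with (b) for $q=p'$, which gives $\mod{E(\al)}\lesssim\|f\|_p^p/\al^p$ for every $p>1$; Marcinkiewicz interpolation with the trivial $L^\infty$ bound then concludes. For (i) we use the Young inequality $st\le \Phi(s)+\e^{\epsilon t}-1$, together with (c) and absorption of the term $C\mod{G'}\le C'\mod{E(\al)}$ after rescaling $f$ (the usual trick with a small constant). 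This gives the $\Phi$ estimate.

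The selection of $\fG'$ exploits the geometry in \eqref{eq:cylinder_n0a0}. Each admissible cylinder is $\cB_{\sqrt{a_0}}(n_0)\times(a_0\e^{-R},\infty)$ with $a_0=\e^j$ and $R$ an integer. We order $\fG$ by decreasing measure, that is, by decreasing $\e^{\nu R}$, and select greedily. A cylinder $\cC$ is discarded if its base slice $\be_0(\cC)$ is covered, up to a fixed proportion, by the union of the base slices of previously selected cylinders, or more precisely by previously selected cylinders. The relation $\mod\cC\asymp\mod{\be_0(\cC)}$ from \eqref{eq:slices NA}, together with the doubling property of $(N,d_N)$, shows that discarded cylinders are contained in a bounded dilate of the union of selected ones. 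This gives (a). Here one uses that a discarded cylinder's projection on $N$ meets the projections of selected cylinders of larger or equal measure at the relevant heights, and Vitali-type arguments on $N$ at each fixed dyadic height.

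The key step, and the main obstacle, is the overlap estimate (b)/(c). The idea is that a point $na$ lying in $\cC\in\fG'$ at slice level $k$ (so $a\approx a_0\e^{-R}\e^{k}$) contributes with weight decaying like $\e^{-k\nu}$ to $\mod{\cC}$, by \eqref{eq:slices NA}. We estimate $\mu\big(\{\sum_{\fG'}\One_\cC>\la\}\big)$ by showing that the number of selected cylinders whose $k$-th slice contains a given point is bounded by a constant independent of $k$, for each $k$. For a fixed slice level, the cylinders have comparable horizontal geometry at that height, and the selection rule prevents many of them from stacking. A point in $m$ cylinders must therefore lie in slices with at least $\sim m$ distinct levels $k$. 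Summing $\mod{\be_k}\le\e^{-k\nu}\mod{\be_0}$ over the levels then yields
$$
\mu\Big(\Big\{\sum\One_\cC>\la\Big\}\Big)\le C\e^{-c\la}\,\mod{G'},
$$
which is exactly (c) and implies (b) with $A_q\lesssim q$. Checking the uniform-in-$k$ bounded overlap via the dilation structure of $N$ is where we expect the real difficulty. In particular, we must control cylinders whose centres sit at different heights $\e^{j}$ but whose bases and slices interlace; this is handled by exploiting that their horizontal radii $\sqrt{a_0}$ scale with the height.
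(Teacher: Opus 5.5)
\textbf{There is a genuine gap: the selection step is not carried out, and the rule you propose delivers neither (a) nor (c).}

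Your overall architecture matches the paper's. You extract a subfamily, bound $|E(\alpha)|$ by the measures of the selected cylinders, control the overlap function exponentially, and close with Young's inequality and absorption. The problem is the selection itself.

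Ordering all cylinders by measure and comparing across horocycles fails for (a). A selected $\mathcal C'$ with $|\mathcal C'|\ge|\mathcal C|$ whose base lies at a lower height $h'<h$ has horizontal radius $\sqrt{h'\,\mathrm e^{R'}}$. This may be much smaller than the radius $\sqrt{h\,\mathrm e^{R}}$ of $\mathcal C$, so $\mathcal C$ lies in no bounded dilate of $\mathcal C'$. Moreover, the ``fixed proportion'' test only tells you that each discarded cylinder lies in $\{\cN^{\fC_\infty'}\One_{G'}>c\}$. You can bound that set by $C|G'|$ only if you already have the restricted weak-type estimate you are trying to prove, so the argument is circular.

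For (b)/(c), which you yourself leave open, a proportional rule does not make the selected cylinders on a given horocycle disjoint. Note that the selected cylinders whose $k$-th slice contains a given point all have their base on one and the same horocycle. So what you need is exactly bounded overlap on each horocycle, and your rule does not provide it.

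\textbf{The missing idea: select separately on each horocycle, and never compare cylinders whose bases lie at different heights.}

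Let $\mathfrak F^j$ be the cylinders of the cover with base at height $\mathrm e^j$. Their radii are uniformly bounded, since $|\mathcal C|<\|f\|_1/\alpha$ bounds $R$. The $5r$ covering lemma on $(\cH_{\mathrm e^j},d_N,\mu_{\mathrm e^j})$ gives a pairwise disjoint $\mathfrak F^j_d\subset\mathfrak F^j$ with $|\bigcup\mathfrak F^j|\le 5^\nu|\bigcup\mathfrak F^j_d|$ (Proposition~\ref{p: horintNA}).

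The crude bound $|E(\alpha)|\le\sum_j|\bigcup\mathfrak F^j|\le C\sum_j\sum_{\mathcal C\in\mathfrak F^j_d}|\beta_0(\mathcal C)|$ is then harmless. The base slices of all selected cylinders are pairwise disjoint: for the same $j$ by disjointness, and for different $j$ because they lie in the disjoint bands $\mathrm e^j<a\le\mathrm e^{j+1}$. So the last sum is at most $|G'|$, which is (a).

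With disjointness on each horocycle, the overlap estimate you expected to be hard becomes immediate. The $\Omega(x)$ selected cylinders containing $x$ meet, hence their bases sit at distinct integer log-heights. So $x\in\beta_m(\mathcal C)$ with $m\ge\Omega(x)-1$ for the lowest one. The decay $|\beta_m(\mathcal C)|=\mathrm e^{-m\nu}|\beta_0(\mathcal C)|$ then gives $|\{\Omega=k\}|\le C\,\mathrm e^{-\nu k}|G'|$ (cf.\ Proposition~\ref{p: overlapping NA}). This is (c) for $\epsilon<\nu$, and hence (b).

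The rest of your argument then goes through. For (ii), your H\"older--Marcinkiewicz route works; the paper instead derives (ii) directly from (i) via the truncation $f=f\One_{\{|f|>\alpha\}}+f\One_{\{|f|\le\alpha\}}$.
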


\noindent
Clearly Theorem~\ref{t: main}~\rmi-\rmii\ follows directly from this, the estimate above for $\cN^{\frb_\infty}\! f$,
and the fact that we already know that $\cN^{\fb_0}$ is of weak type $(1,1)$.  


The proof of Theorem~\ref{t: main cU} requires some preliminary  {results}, 
which are stated in Proposition~\ref{p: horintNA} and Proposition~\ref{p: overlapping NA}.

\subsection{Overlapping properties.} 

By the { classical} result of C\'ordoba and Fefferman \cite[Proposition~1]{CF} 
{  in order to get $L^p$ estimates for the uncentred Hardy--Littlewood maximal operator in $\BR^n$ with respect to a family 
of a collection $\fF$ of bounded open sets, it suffices to control the ``overlapping number in the $L^{p'}$ norm'' 
of certain subfamilies of $\fF$.  We adopt a similar strategy on Damek--Ricci spaces.  We shall estimate the ``overlapping number'' 
of certain subfamilies of $\fC_\infty'$ in Proposition~\ref{p: overlapping NA} below. }

Suppose that $u$ is a positive number. 
Clearly the horocycle $\cH_u$ can be identified with $N$ via the map { $nu \mapsto n$}.  
In the proof of the following proposition we shall consider the metric measure spaces $(\cH_u, d_{N}, \mu_u)$, where we 
abuse the notation and write 
$$
d_N\big((X,Z,u), (X',Z', u) \big)
:= d_N\big((X,Z), (X',Z') \big);
$$  
here, $d_N$ is the distance \eqref{eq:distN} on $N$ associated to the gauge $\cG$, and $\mu_u$ is the measure 
$$
	\!\wrt \mu_u(X,Z,u) 
	= \frac{\wrt X \wrt Z}{u^{\nu +1}}.  
$$
Recall the basic covering lemma \cite[Theorem~1.2]{He}, which applies, in particular, to any collection of balls of 
uniformly bounded diameters in $(\cH_u, d_{N}, \mu_u)$.  The following proposition is an important consequence of that lemma.

\begin{proposition}\label{p: horintNA}
	Suppose that $\fF$ is a collection of cylinders in $\fC_\infty'$ such that all their bases have 
	uniformly bounded volumes and are contained in the same horocycle.  Then there exists a 
	subfamily $\fF_d$ of $\fF$ consisting of pairwise disjoint cylinders such that
   	\begin{align*}
		\mu\Big(\bigcup_{\cC\in \fF} \, \cC\Big) 
		\leq  { 5^\nu}\, \mu\Big(\bigcup_{\cC \in \fF_d} \, \cC\Big). 
    	\end{align*} 
\end{proposition}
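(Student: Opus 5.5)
The plan is to reduce the statement to a Vitali-type covering lemma for the balls of $(N,d_N)$, by exploiting the product structure of cylinders whose bases lie on a common horocycle. One caveat up front: the route below reproduces the constant $5^{\nu}$ only if $N$ has homogeneous dimension $\nu$ for $d_N$, and I believe it actually has $2\nu$.

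\emph{Step 1: product structure.} Let $\cH_u$ be the common horocycle containing the bases. By \eqref{eq:cylinder_n0a0}, each $\cC=\cC_K(n_0a_0)\in\fF$ is
$$
\cC=\cB_{r_\cC}(n_\cC)\times(u,\infty), \qquad r_\cC=\sqrt{a_0},\quad n_\cC=n_0,\quad u=a_0\e^{-K}.
$$
The lower endpoint $u$ is the same for all $\cC\in\fF$. Hence, for any subfamily $\fF'$, integrating the Haar density \eqref{f: Haar NA} in $a$ first gives
$$
\mu\Big(\bigcup_{\cC\in\fF'}\cC\Big)=\frac{u^{-\nu}}{\nu}\,\Bigmod{\bigcup_{\cC\in\fF'}\cB_{r_\cC}(n_\cC)}_N .
$$
For the same reason, two cylinders of $\fF$ are disjoint exactly when their $N$-balls are disjoint. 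So the proposition reduces to a covering statement for the family of balls $\{\cB_{r_\cC}(n_\cC)\}$ in $N$.

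\emph{Step 2: bounded radii.} The base of $\cC$ has $\mu_u$-measure $\om\, r_\cC^{2\nu}u^{-\nu-1}$, where $\om$ is the measure of $\cB_1(e_0)$. The bases have uniformly bounded volumes and $u$ is fixed, so the radii $r_\cC$ are uniformly bounded.

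\emph{Step 3: covering lemma.} Apply \cite[Theorem~1.2]{He} in $(N,d_N)$ to this family of balls. It yields a pairwise disjoint subfamily $\{\cB_{r_\cC}(n_\cC)\}_{\cC\in\fF_d}$ such that
$$
\bigcup_{\cC\in\fF}\cB_{r_\cC}(n_\cC)\subset\bigcup_{\cC\in\fF_d}\cB_{5r_\cC}(n_\cC).
$$
By Step 1, $\fF_d$ is a pairwise disjoint subfamily of $\fF$. It then remains to compare $\bigmod{\cB_{5r}}_N$ with $\bigmod{\cB_r}_N$ and sum over the disjoint balls.

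\emph{Step 4: the dilation constant (main obstacle).} The dilations $(X,Z)\mapsto(\la X,\la^2Z)$ scale $\cG$ by $\la$ and have Jacobian $\la^{p+2q}=\la^{2\nu}$. Hence
$$
\bigmod{\cB_{\la r}}_N=\la^{2\nu}\,\bigmod{\cB_r}_N ,
$$
and the Vitali factor $5$ directly gives only $5^{2\nu}$. This is where I expect the real difficulty.

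First, I would try to shrink the enlargement factor using admissibility. The radii $\sqrt{a_0}=\e^{j/2}$ take values in a discrete set bounded above, so a greedy selection by decreasing radius attains the maximum at every stage. Every discarded ball then meets a selected ball of radius at least its own, and the triangle inequality puts it inside the concentric ball of three times the radius. This improves the constant to $3^{2\nu}=9^\nu$, but not to $5^\nu$.

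Reaching $5^\nu$ would require an enlargement factor at most $\sqrt5$, which a general triangle-inequality argument cannot supply. I would therefore check carefully whether the intended normalisation treats $N$ as having homogeneous dimension $\nu$ for $d_N$, as the stated constant suggests. Under that normalisation, Step 3 together with $\mu(5B)\le5^\nu\mu(B)$ gives exactly
$$
\mu\Big(\bigcup_{\cC\in\fF}\cC\Big)\le 5^\nu\,\mu\Big(\bigcup_{\cC\in\fF_d}\cC\Big).
$$
Otherwise, the same argument proves the proposition with $5^{2\nu}$ (or $9^\nu$) in place of $5^\nu$. Any constant depending only on $\nu$ should suffice for the later applications.
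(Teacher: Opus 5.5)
Your Steps 1--3 are exactly the paper's proof. The paper also notes that cylinders with bases on a common horocycle are disjoint iff their bases are. It applies \cite[Theorem~1.2]{He} to the bases $\be(\cC)$ and then integrates in the vertical variable over the common interval $(u,\infty)$.

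Your Step 4 objection is correct, and it points to a real slip in the paper's proof.

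\begin{itemize}
\item \textbf{The slip.} The paper writes $\mu_u\big(\bigcup_{\cC\in\fF_d} 5\,\be(\cC)\big)=5^\nu\sum_{\cC\in\fF_d}\mu_u\big(\be(\cC)\big)$. However, $\cG(\la X,\la^2 Z)=\la\,\cG(X,Z)$ and this dilation has Jacobian $\la^{p+2q}=\la^{2\nu}$, so in fact $\bigmod{\cB_{5r}(n)}=5^{2\nu}\bigmod{\cB_r(n)}$.
\item \textbf{Internal consistency check.} This $2\nu$-scaling is what makes \eqref{eq:cylinder_n0a0} compatible with left-invariance. Indeed, $\mod{\cC_R(n_0a_0)}=\mod{\cC_R(e)}$ forces $\bigmod{\cB_{\sqrt{a_0}}(n_0)}=\om\,a_0^{\nu}=\om\,(\sqrt{a_0})^{2\nu}$.
\item \textbf{What is actually proved.} The paper's argument gives the proposition with $5^{2\nu}$, not $5^\nu$. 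Your greedy selection over the discrete radii $\e^{j/2}$ correctly improves this to $3^{2\nu}=9^\nu$. Neither route yields the stated $5^\nu$.
\item \textbf{Downstream effect.} As you say, only some constant depending on $\nu$ is needed later. In \eqref{f: intermediate fF} and the proof of Theorem~\ref{t: main cU}, the factor $(5\e)^\nu$ simply becomes $5^{2\nu}\e^\nu$, and nothing else changes.
\end{itemize}
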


\begin{proof}
	Clearly two cylinders in $\fF$ are disjoint if and only 
	if their bases are disjoint.  Then it is natural to consider the collection of cylinders 
	$$
		\big\{\be_0(\cC): \cC \in \fF\big\}.
	$$
	By \eqref{eq:k-th slice} and \eqref{eq:vertical_dist} each slice $\be_0(\cC)$ has geodesic height one.  Their 
	bases ${ \be}(\cC)$ { are balls (with respect to the metric $d_N$) belonging} 
	to the same horocycle, $\cH_u$ say.  { Since their volumes are uniformly bounded,}
	 {the basic} covering lemma { \cite[Theorem~1.2]{He}} applies to the collection 
	$\big\{{ \be}(\cC): \cC\in \fF\big\}$.  
	Hence there exists a subfamily $\fF_d$ of $\fF$ such that the sets in $\big\{{ \be}(\cC): \cC \in \fF_d\big\}$ 
	are pairwise disjoint, and 
	$$
	\begin{aligned}
		\mu_u \Big(\bigcup_{{ \cC} \in \fF} \, { \be}({ \cC} )\Big)  
		\leq  \mu_u \Big(\bigcup_{{ \cC}  \in \fF_d} \, { 5\, \be}({ \cC} )\Big)
		=     { 5^\nu} \, \sum_{{ \cC}  \in \fF_d} \, 
			\mu_u \big({ \be}({ \cC} )\big).
	\end{aligned}
	$$
	Clearly the cylinders in $\fF_d$ are pairwise disjoint.  Furthermore
	$$
	\begin{aligned}
		\mu \Big(\bigcup_{\cC\in \fF} \, \cC\Big)  
		& =    \mu_u \Big(\bigcup_{\cC\in \fF} \, { \be}(\cC)\Big) \,\, \int_u^\infty  \frac{\wrt y}{y^{\nu+1}} \\
		& \leq \frac{{ 5^\nu}}{ {\nu}\, u^\nu} \, \sum_{\cC \in \fF_d} \, 
			\mu_u \big({ \be}(\cC)\big)  \\
		& =     { 5^\nu} \, \sum_{\cC \in \fF_d} \, \mu(\cC), 
	\end{aligned}
	$$
	as required.  
	\end{proof}

\begin{definition} \label{def: maximality NA}
	\rm{
	A collection $\fF$ of  {admissible cylinders} { (see Definition~\ref{def: cC admissible})}
	is \textit{maximal} if 
	\begin{enumerate}
		\item[\itemno1]
			it is maximal with respect to inclusion;
		\item[\itemno2]
			any two cylinders in $\fF$, such that their bases lie on the same horocycle, are disjoint.
	\end{enumerate}
}
\end{definition}

For any admissible cylinder $\cC$, we denote by $h(\cC)$ the height of the points belonging to its base ${ \be} (\cC)$.  
Then
$$
h\big( \cC_R(n_0a_0)\big)
= a_0 \, \e^{-R},
$$
so $\log h\big( \cC_R(n_0a_0)\big) = \log a_0-R$ is an integer.   

Given a collection $\fG$ of cylinders in $\fC_\infty'$, denote by $G$ their union. 
Define the \textit{overlapping number}~$\Om$ of the family $\fG$ by 
$$
\Om(x)
:= \sharp \,\{\cC \in \fG: \cC\ni x\} 
\quant x \in G,
$$ 
and put $\Om$ equal to zero outside $G$.  For every positive integer $k$, set 
$$
\Om_k := \big\{x \in G: \Om(x) = k\big\}.
$$

\begin{proposition} \label{p: overlapping NA}
	Suppose that $\fG$ is a \textit{finite} collection of maximal (in the sense of Definition~\ref{def: maximality NA}) 
	cylinders in $\fC_\infty'$, and denote by $G$ their union. 
	Then 
    	\[
    		|\Om_k|
		\leq \frac{\e^{2\nu}}{\e^\nu -1}\, |G| \, \e^{-k}.
    	\]
\end{proposition}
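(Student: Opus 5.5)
Since $\fG$ is maximal, any two cylinders whose bases lie on the same horocycle are disjoint. A point $x$ therefore belongs to at most one cylinder of $\fG$ per base height $\e^{m}$, $m\in\BZ$. If $x$ lies in exactly $k$ cylinders of $\fG$, their base heights $\e^{m_1}<\dots<\e^{m_k}$ are distinct integers in the logarithmic scale, and all are below $h(x)$. In particular the cylinder $\cC^*$ with the lowest base satisfies $\log h(x)-m_1\geq k-1$, since the $m_j$ are distinct integers with $m_k<\log h(x)$. Hence $x$ lies in the slice $\be_j(\cC^*)$ for some $j\geq k-1$. Write $\cC_x$ for this lowest cylinder containing $x$. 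The plan is to charge $x$ to $\cC_x$ and to use the geometric decay of slices \eqref{eq:slices NA}.

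\textbf{Step 1.} For each $\cC\in\fG$, the set of $x\in\Om_k$ with $\cC_x=\cC$ is contained in $\bigcup_{j\geq k-1}\be_j(\cC)$. By \eqref{eq:slices NA}, its measure is at most
\[
\sum_{j\geq k-1}\e^{-j\nu}\,\bigmod{\be_0(\cC)}
= \frac{\e^{-(k-1)\nu}}{1-\e^{-\nu}}\,\bigmod{\be_0(\cC)}
\leq \frac{\e^{\nu}}{\e^{\nu}-1}\,\e^{-(k-1)\nu}\,\bigmod{\cC}.
\]

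\textbf{Step 2.} Summing over $\cC$ gives $|\Om_k|\leq \frac{\e^\nu}{\e^\nu-1}\e^{-(k-1)\nu}\sum_{\cC\in\fG}|\cC|$. The sum $\sum|\cC|$ could be much larger than $|G|$, so it must be controlled. The fix is to charge only to cylinders that actually serve as lowest ones, and to bound the relevant parts of those cylinders by $|G|$ using disjointness. Group the cylinders by base height $\e^m$. Cylinders with a common base height are pairwise disjoint, so for each fixed $m$ the total measure $\sum_{h(\cC)=\e^m}|\be_0(\cC)|$ is at most $|G\cap\{\e^m<h\leq \e^{m+1}\}|$. These horizontal strips are disjoint for different $m$, so
\[
\sum_{\cC\in\fG}\bigmod{\be_0(\cC)}\leq |G|.
\]
This is the crucial observation. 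We then redo Step 1 with $\be_0$ kept in place of $|\cC|$:
\[
|\Om_k|\leq \frac{\e^{-(k-1)\nu}}{1-\e^{-\nu}}\sum_{\cC}\bigmod{\be_0(\cC)}\leq \frac{\e^{\nu}}{\e^\nu-1}\,\e^{-(k-1)\nu}\,|G|.
\]
Since $\nu\geq 1$ we have $\e^{-(k-1)\nu}\le \e^{\nu}\e^{-k}$, and this yields the stated constant $\e^{2\nu}/(\e^\nu-1)$. The slack allows for a possible off-by-one in the slice indexing: a point at height exactly $\e^{m+1}$ might be counted in slice $j$ versus $j+1$, and the extra factor $\e^{\nu}$ absorbs this.

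\textbf{Main obstacle.} The key point is to justify carefully that $x$ lies in slice $j\geq k-1$ of its lowest cylinder. This requires $x\in\cC$ to force $\log h(\cC)<\log h(x)$, and distinctness of the base heights of the cylinders containing $x$, which is exactly property (ii) of maximality. The boundary conventions in \eqref{eq:k-th slice}, where the slices are half-open intervals $(k,k+1]$, must also be tracked. The finiteness of $\fG$ is used only to make the sums finite and the charging well defined.
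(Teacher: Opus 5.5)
Your argument is correct and is essentially the paper's proof: you charge each $x\in\Om_k$ to the lowest cylinder containing it, use property (ii) of maximality to get $k$ distinct integer log-heights so that $x$ lies in some $\be_m$ of that cylinder with $m\geq k-1$, sum the geometric decay \eqref{eq:slices NA}, and bound $\sum_{\cC\in\fG}\mod{\be_0(\cC)}$ by $\mod{G}$. Your justification that the slices $\be_0(\cC)$ are pairwise disjoint (disjoint within a horocycle, lying in disjoint horizontal strips across different integer heights) is more explicit than the paper's one-line appeal to maximality; two small remarks are that your indexing is already exact, so the comment about slack absorbing an off-by-one is unnecessary, and that the step $\e^{-k\nu}\leq \e^{-k}$ needs $\nu\geq 1$, which the paper also assumes tacitly (it fails only in the degenerate case $\dim\frv=1$, $\frz=0$, where one simply keeps $\e^{-k\nu}$).
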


\begin{proof}
	For $x$ in $G$, denote by $\cC_1,\ldots,\cC_{\Om(x)}$ the (distinct) cylinders in~$\fG$ that contain~$x$,
	and by $h_1,\ldots,h_{\Om(x)}$ the heights of their bases, which we list in decreasing order. Since all the cylinders $\cC_1,\ldots,\cC_{\Om(x)}$ contain $x$, hence they are not disjoint,  
	their bases must belong to different horocycles, by the definition of maximality.
	Hence the numbers $\log h_1,\ldots,\log h_{\Om(x)}$ must be distinct integers {(see the
	remark after Definition~\ref{def: maximality NA}).  

	Recall also that the slices $\be_0(\cC_1),\ldots,\be_0(\cC_{\Om(x)})$ have Riemannian thickness $1$.  
	Therefore they are mutually disjoint.
	Hence the Riemannian distance between $x$ and the base of $\cC_{\Omega(x)}$ 
	is at least $\Omega(x)-1$.  Indeed,
	$$
		d\big(x, \be(\cC_{\Omega(x)})\big) 
		\geq \log \frac{h(x)}{h_{\Om(x)}} 
		\geq \log \frac{h_1}{h_{\Om(x)}} 
		=    \sum_{j=2}^{\Om(x)} \, \log \frac{h_{j-1}}{h_{j}}  
		\geq \Om(x)-1.
	$$
	}
	
	Now, if $x$ is in $\Omega_k$, then $x$ belongs to exactly $k$ cylinders in $G$.
	The Riemannian distance between $x$ and $\be_0(\cC_{{ \Om(x)}})$ is at least $k-1$ ($k$ distinct integer heights), 
	so that $x$ belongs to 
	$$
		\bigcup_{m \geq k-1}^{\infty} \,  {\beta}_m\big(\cC_{\Omega(x)}\big) .
	$$
	We let $x$ vary in $\Om_k$, and obtain the containment
	$$
	\Om_k 
	\subseteq \bigcup_{\cC\in \fG} \, \, \bigcup_{m\geq k-1}\,  \be_m(\cC).
	$$
 	Now \eqref{eq:slices NA} yields
	$$
       		\Big|\bigcup_{m\geq k-1} \be_m(\cC)\Big|
		\leq \sum_{m= k-1}^{\infty}\, \e^{-m\nu } \, \bigmod{\be_0(\cC)}
		\leq \frac{\e^{2\nu -k}}{\e^\nu -1} \,\, \bigmod{\be_0(\cC)}.
	$$
	Hence
	$$
	\bigmod{\Om_k}
	\leq \frac{\e^{2\nu -k}}{\e^\nu -1} \,\,\, \sum_{\cC\in \fG} \, \bigmod{\be_0(\cC)}.
	$$
	Since by assumption the cylinders in $\fG$ are maximal, their bases are disjoint.  Therefore
	$$
	\sum_{\cC\in \fG} \, \bigmod{\be_0(\cC)}
	= \Bigmod{\bigcup_{\cC\in \fG} \, \be_0(\cC)}
	\leq \mod{G}.
	$$
	Thus,
   	\begin{equation*}
		\bigmod{\Om_k}
		\leq  \frac{\e^{2\nu -k}}{\e^\nu -1} \,\, \mod{G},
   	\end{equation*}
	as required.
\end{proof}

\begin{remark} 
	\rm{
	Notice that the estimate in Proposition~\ref{p: overlapping NA} has the following interesting consequence.  
	For every $r$ in $[1,\infty)$
	\begin{equation} \label{f: Lorentz}
	\Bignorm{\sum_{\cC\in\fG} \One_\cC}{r}
	\leq A_r \, \norm{\One_G}{r},
	\end{equation}
	where $\ds A_r^r := \frac{\e^{2\nu}}{\e^\nu-1} \,\, \sum_{k=1}^\infty \, k^r \, \e^{-k}$.

	Indeed, 
	$$
    		\int_{G}\, \Om^r \wrt\mu
		=    \sum_{k=1}^\infty \, k^r\, \bigmod{\Om_k}
		\leq \frac{\e^{2\nu}}{\e^\nu-1} \, \sum_{k=1}^\infty \, k^r \, \e^{-k} \,\, \mod{G},  
	$$
	which is equivalent to the required estimate.
	}
\end{remark}

Our main estimate, Theorem~\ref{t: main}~\rmi, requires the following lemma.  

\begin{lemma} \label{l: Young}
	Suppose that $\psi$ is a $C^1$ strictly increasing function on $[0,\infty)$ such that $\psi(0) = 0$.  Then for all positive
	numbers $a,$ $b$ and $\la$ the following inequality holds:
	$$
		ab \leq \la \, \int_0^a \, \psi(u) \wrt u + \la \int_0^{b/\la} \, \psi^{-1}(v) \wrt v.
	$$
\end{lemma}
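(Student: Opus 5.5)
The plan is to reduce the statement to the classical Young inequality by rescaling. I first treat the case $\la = 1$. There the claim reads
$$
ab \leq \int_0^a \psi(u)\wrt u + \int_0^{b} \psi^{-1}(v)\wrt v ,
$$
which is Young's inequality for the pair $\psi$, $\psi^{-1}$. For general $\la$, I set $b' := b/\la$. The $\la=1$ inequality gives $ab' \leq \int_0^a\psi + \int_0^{b'}\psi^{-1}$, and multiplying by $\la$ yields exactly the stated inequality. So the whole matter is the case $\la=1$.

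For $\la = 1$ I would give the area argument. Denote by $M := \lim_{u\to\infty}\psi(u)$, possibly $+\infty$, so that $\psi^{-1}$ is defined on $[0,M)$. Since $\psi$ is $C^1$ and strictly increasing with $\psi(0)=0$, the substitution $v=\psi(u)$ and an integration by parts give
$$
\int_0^{\psi(a)} \psi^{-1}(v)\wrt v = \int_0^a u\,\psi'(u)\wrt u = a\psi(a) - \int_0^a \psi(u)\wrt u .
$$
Hence the desired inequality is equivalent to
$$
ab \leq a\psi(a) + \int_{\psi(a)}^{b} \psi^{-1}(v)\wrt v ,
$$
with the convention that this integral is negative when $b<\psi(a)$.

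I then split into two cases.
\begin{enumerate}
\item[\itemno1] If $b \geq \psi(a)$, then $\psi^{-1}(v) \geq a$ for $v\geq \psi(a)$. So the integral is at least $a\big(b-\psi(a)\big)$, which gives the claim.
\item[\itemno2] If $b<\psi(a)$, then $\psi^{-1}(v)\leq a$ on $[b,\psi(a)]$. So $\int_b^{\psi(a)}\psi^{-1} \leq a\big(\psi(a)-b\big)$, which again gives the claim.
\end{enumerate}

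The only delicate point is the case $b\geq M$ when $M<\infty$, where $\psi^{-1}$ is not defined on all of $[0,b]$. The statement implicitly assumes $\psi$ is onto $[0,\infty)$; in the application it will be $\psi(u)=\log(2+u)-\log 2$ or similar. If $\psi$ is not onto, I would either adopt the convention $\psi^{-1}=+\infty$ beyond $M$, which makes the inequality trivial, or note that the statement should be read with $\psi$ unbounded. I expect this bookkeeping, rather than any estimate, to be the only obstacle.
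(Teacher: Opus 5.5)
Your proof is correct and follows the paper's route: the paper applies the classical Young inequality to $\la\psi$ and substitutes $v=u/\la$, which is equivalent to your applying it to $\psi$ at the point $b/\la$ and multiplying by $\la$. The only difference is that you prove Young's inequality yourself (via $\int_0^{\psi(a)}\psi^{-1}=a\psi(a)-\int_0^a\psi$ and the cases $b\ge\psi(a)$, $b<\psi(a)$), which is slightly more complete than the paper's quoted version, stated only for $b$ below $f(a)$; also, in the paper's application $\psi(u)=\e^{u/2}-1$ is onto $[0,\infty)$, so the domain issue you raise does not occur.
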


\begin{proof}
	Young's inequality (see \cite[p. 379]{MPF}) states that if $f$ is an increasing continuous function 
	on $[0,a]$, with $f(0) = 0$, and $b$ is in $\big(0,f(a)\big)$, then 
	$$
		ab \leq \int_0^a \, f(t) \wrt t + \int_0^b\, f^{-1}(v) \wrt v.
	$$
	The required formula follows from this by taking $\la\, \psi$ instead of $f$, and changing variables ($v = u/\la$) 
	in the second integral.  
\end{proof}

Consider the case where $\psi(u):=\e^{u / 2}-1$.  Then $\ds \psi^{-1}(v) = 2 \log (v+1)$, 
and Lemma~\ref{l: Young} yields
$$
\begin{aligned}
ab
	& \leq \la \,\int_0^a  {(\e^{t/ 2}-1)} \wrt t + 2 \la \, \int_0^{b/\la} \, \log (v+1) \wrt v \\ 
	& = \la\, \big(2 \e^{a/2}-a-2\big)  
		+ 2\la\, \Big[\left(\frac{b}{\la}+1\right) \log \left(\frac{b}{\la}+1\right)-\frac{b}{\la}\Big].
\end{aligned}
$$
Since $2\, \e^{a/2} - a -2 \leq 2\, {(\e^{a/2}-1)}$ and 
$$
	(u+1)\, \log(u+1) - u \leq u \, \log(u+1)
	\quant u \in [0,\infty),
$$
we get 
\begin{equation} \label{f: ab less}
	ab \leq 2\,\la \, {(\e^{a/2}-1)} + 2 \,b \,\log\Big(\frac{b}{\la}+1\Big).  
\end{equation}

\begin{proof} (of Theorem~\ref{t: main cU})
%
	First we prove \rmi. 
	Observe that if $x$ belongs to $E(\al)$, then there exists an admissible cylinder $\cC$ containing~$x$ such that  
	\begin{equation} \label{f: average al}
	\frac{1}{\bigmod{\cC}} \, \int_{\cC} \bigmod{f} \wrt \mu
	> \al.  
	\end{equation}
	Then $\cC\subseteq E(\al)$.  This entails that $E(\al)$ can be written as a union of 
	 {admissible} cylinders~$\cC$ for which \eqref{f: average al} holds.  
	Furthermore, if $\cC$ is one of these cylinders and if $f$ belongs to $\lp{S}$ for some $p$ in $(1,\infty)$, 
	then \eqref{f: average al} and H\"older's inequality imply that 
	\begin{equation} \label{f: p estimate}
	\mod{\cC} < \frac{\bignormto{f}{p}{p}}{\al^p}.
	\end{equation}
	Since $\cC$ is a translate of a cylinder $\cC_R(e)$ for some $R>1$, 
	$$
		\mod{\cC} = { \frac{\om}{\nu}}\, \e^{\nu R}>{ \frac{\om}{\nu}}\, \e^{\nu }
	$$
	{ (see Proposition~\ref{p: properties setsNA})}.
	 {This} implies that $\al < { (\nu/\om)}^{1/p}\,\e^{-\nu/p}\,\norm{f}{p}$ and 
	$$
	R < \frac{1}{\nu}\, \log \Big({ \frac{\nu}{\om}}\,\frac{\normto{f}{p}{p}}{\al^p} \Big).
	$$
	Now recall that for $\cC$ in $\fC_\infty'$ the associated number $R$ is an integer greater than one.   
	Therefore, there exist at most $\ga-2$ such numbers, where $\ga$ denotes the integer part of the number on the
	right-hand side of the previous inequality.

	Clearly $E(\al)$ is the union of cylinders $\cC$ such that 
	\eqref{f: average al} holds.  We can consider  {only those admissible cylinders covering 
	$E(\al)$ and satisfying \eqref{f: p estimate}} that are maximal with respect to inclusion.  
	We denote by $\fG(\al)$ the collection of such cylinders.  

	Denote by $\fF^j(\al)$ the subcollection of all $\cC$ in $\fG(\al)$ with $h(\cC) = { \e^j}$.  
	By Proposition~\ref{p: horintNA}, for each integer~$j$ there exists a disjoint subcollection $\fF_d^j(\al)$ of $\fF^j(\al)$ 
	with the property that 
	\begin{equation} \label{f: disjoint fFdj}
		\Bigmod{\bigcup_{\cC\in \fF^j(\al)} \, \cC}
		\leq  { 5^\nu}\, \, \Bigmod{\bigcup_{\cC \in \fF_d^j(\al)} \, \cC}. 
	\end{equation}
	The family 
	$
	\ds\fF(\al)
	:= \bigcup_{j\in\BZ} \, \fF_d^j(\al) 
	$ 
	is maximal in the sense of Definition~\ref{def: maximality NA}.  Now
	\begin{equation} \label{f: intermediate fF}
	\begin{aligned}
		\mod{E(\al)} 
		& \leq \sum_{j\in\BZ} \,\, \Bigmod{\!\bigcup_{\cC\in\fF^j(\al)} \, \cC } \\
		& \leq { 5^\nu}\, \sum_{j\in\BZ} \,\, \Bigmod{\!\bigcup_{\cC\in\fF_d^j(\al)} \, \cC} \\
		& \leq { 5^\nu}\, \sum_{j\in\BZ} \,\, \sum_{\! \cC\in\fF_d^j(\al)} \, \mod{\cC};
	\end{aligned}
	\end{equation}
	the first and the third inequality above follow from the subadditivity of~$\mu$ and the second is a consequence of 
	\eqref{f: disjoint fFdj}.  By \eqref{eq:slices NA} the inequality 
	$\ds \mod{\cC} \leq \frac{\e^\nu}{\e^\nu-1}\, \be_0(\cC)$ holds.  By combining this, the pairwise disjointness 
	of the sets $\be_0(\cC)$ as $\cC$ 
	varies in $\ds\bigcup_{j\in\BZ} \fF_d^j(\al)$, and \eqref{f: intermediate fF}, we obtain that 
	$$
	\begin{aligned}
		\mod{E(\al)} 
		& \leq  \frac{({ 5}\e)^\nu}{\e^\nu-1} \, \sum_{j\in\BZ} \,\, 
			\sum_{\! \cC\in\fF_d^j(\al)} \, \mod{\be_0(\cC)} \\
		& =     \frac{({ 5} \e)^\nu}{\e^\nu-1} \, \Bigmod{\bigcup_{j\in\BZ} \,\, 
			\bigcup_{\! \cC\in\fF_d^j(\al)} \, \be_0(\cC)} \\
		& \leq  \frac{({ 5} \e)^\nu}{\e^\nu-1} \, \Bigmod{\bigcup_{\! \cC\in\fF(\al)} \, \cC}.
	\end{aligned}
	$$

	Suppose that $K$ is a compact subset of $E(\al)$.  Then there is a finite subcollection $\fF_K(\al)$ of cylinders
	in $\fF(\al)$ whose union covers $K$.  

	We argue much as in the proof of Proposition~\ref{p: overlapping NA}, with $\fF_K(\al)$ in place of~$\fG$.  
	We set $\ds G_K(\al) := \bigcup_{\cC\in \fF_K(\al)}\, \cC$ and define the \textit{overlapping number}~$\Om$ of the family 
	$\fF_K(\al)$ by 
	$$
	\Om(x)
	:= \sharp \,\{\cC \in \fF_K(\al): \cC\ni x\} 
	\quant x \in G_K(\al),
	$$ 
	and put $\Om$ equal to zero outside $G_K(\al)$.  For every positive integer $k$ set $\Om_k := \big\{x \in G: \Om(x) = k\big\}$.
	Proposition~\ref{p: overlapping NA} yields the estimate 
   	\begin{equation}\label{claim I}
		\bigmod{\Om_k}
		\leq  \frac{\e^{2\nu}}{\e^{{\nu}}-1} \,\, \mod{G_K(\al)} \, \e^{-k}.
   	\end{equation}
	Much as in the proof of \cite[Proposition 1]{CF} (see also \cite[Proposition~4.3]{MS}), observe that 
	$$
	\begin{aligned}
		\bigmod{G_K(\al)}
		 & \leq  \sum_{\cC\in \fF_K(\al)} \mod{\cC} \\
		 & \leq \frac{1}{\al}\,  \sum_{\cC\in \fF_K(\al)}\,  \int_{\cC} \mod{f} \wrt \mu \\
		 & \leq   \int_{S}  \,\frac{\mod{f}}{\al} \,\, \sum_{\cC\in \fF_K(\al)} \One_\cC\wrt \mu.
	\end{aligned}
	$$
	In order to obtain cleaner formulae, set 
	$$
	w := \sum_{\cC\in \fF_K(\al)} \One_\cC.
	$$    
	The inequality \eqref{f: ab less}, with $\ds a := w$ and $\ds b := \frac{\mod{f}}{\al}$, yields
	$$
		\int_{S}  \,\frac{\mod{f}}{\al} \,\, w \wrt \mu
		\leq 2\, \la \int_{S} \, {(\e^{w /2}-1)} \wrt \mu + 2 \,
			\int_{S} \, \frac{\mod{f}}{\al} \, \log\Big(\frac{\mod{f}}{\al\la}+1\Big) \wrt \mu.
	$$
	Observe that 
	$$
	\begin{aligned}
		\int_{S} \, {(\e^{w /2}-1)} \wrt \mu \, 
		& {\leq}     \sum_{k=1}^\infty \, \e^{k/2} \, \mod{\Om_k} \\
		& \leq \frac{\e^{2\nu}}{\e^\nu-1} \,  \sum_{k=1}^\infty \, \e^{-k/2} \, \mod{G_K(\al)} \\
		& =  \frac{\e^{2\nu}}{(\e^\nu-1) \, (\sqrt \e -1)} \, \mod{G_K(\al)}  {.}
	\end{aligned}
	$$
	By combining the formulae above, we get that
	$$
	\begin{aligned}
		\bigmod{G_K(\al)}
		& \leq 2\, \la \, \frac{\e^{2\nu}}{(\e^\nu-1) \, (\sqrt \e -1)} \, \mod{G_K(\al)}  
			+ 2 \int_{S} \, \frac{\mod{f}}{\al} \, \log\Big(\frac{\mod{f}}{\al\la}+1\Big) \wrt \mu. 
	\end{aligned}
	$$
	Choose 
	$$
	\la 
	= \frac{1}{4} \, \frac{(\e^\nu-1) \, (\sqrt \e -1)}{\e^{2\nu}}.
	$$   
	Then, 
	$$
		\bigmod{G_K(\al)}
		\leq 4 \, \int_{S} \, \frac{\mod{f}}{\al} \, \log\Big(\frac{\mod{f}}{\al\la}+1\Big) \wrt \mu.  
	$$
	Now the inner regularity of $\mu$ and the fact that $\la$ is independent of $K$ yields
	$$
		{\bigmod{E(\al)}
		\leq 4 \, \int_{S} \, \frac{\mod{f}}{\al} \, \log\Big(\frac{\mod{f}}{\al\la}+1\Big) \wrt \mu}.  
	$$
	{Notice that $\la<1$, so that  
	$$
	\log\Big(\frac{\mod{f}}{\al\la}+1\Big)
	\leq \log\la^{-1} + \log \Big(\frac{\mod{f}}{\al}+2\Big),
	$$
	and
	$$
	\begin{aligned}
		\bigmod{E(\al)}
		& \leq 4 \, \log{\la^{-1}}\, \int_S \, \frac{\mod{f}}{\al} \wrt\mu
			+ 4 \, \int_S \, \frac{\mod{f}}{\al} \, \log \Big(\frac{\mod{f}}{\al}+2\Big) \wrt\mu \\			
		& \leq 4 \Big[\frac{\log{\la^{-1}}}{\log 2} + 1\Big]\, 
			\int_S \, \frac{\mod{f}}{\al} \, \log \Big(\frac{\mod{f}}{\al}+2\Big) \wrt\mu,
	\end{aligned}	
	$$
	as required.  
	}

	Next we prove \rmii.  Given a function $f$ in $\lu{S}$ it is convenient to set 
	$$
		F_\al := \big\{x\in S: \mod{f(x)} > \al \big\}
		\quad\hbox{and}\quad
		F^\al := \big\{x\in S: \mod{f(x)} \leq \al \big\}.  
	$$
	Correspondingly, we write 
	$$
	f_\al := \One_{F_\al} \, f
	\quad\hbox{and} \quad
	f^\al := \One_{F^\al} \, f. 
	$$
	{ Recall that $\cU$ is short for $\cN^{\fC_\infty'}$.}
	Clearly $f = f_\al + f^\al$, and $\cU\! f \leq \cU\! f_\al + \cU\! f^\al$. 
	Since $\cU$ is a contraction on $\ly{S}$, $\cU\! f^\al \leq \al$, whence 
	$$
		\{\cU\! f>2\al\}  \subseteq \{\cU\! f_\al > \al \}.
	$$
	Set $E (\al) := \big\{x \in S: \cU\! f > \al \big\}$. 
	Now Theorem~\ref{t: main cU}~\rmi\ and Tonelli's Theorem imply that
	$$
	\begin{aligned}
		\bignormto{\cU\! f}{p}{p}
		& =    p\, \ioty \, \mod{E(\al)} \, \al^{p-1} \, \wrt \al \\
		& \leq C\, p\, \ioty \!\al^{p-1} \,\wrt \al \,  \int_{S} \, \frac{\mod{f_\al(x)}}{\al} \, 
			\log\Big({2}+ \frac{\mod{f_\al(x)}}{{\al}} \Big) \wrt \mu(x) \\
		& \leq C\, p\, \int_{S}  \!\wrt \mu(x) \, \int_0^{\mod{f(x)}}   \,  \, \frac{\mod{f(x)}}{\al} \, 
			\log\Big({2}+ \frac{\mod{f(x)}}{{\al}} \Big) \, \al^{p-1} \,\wrt \al.
	\end{aligned}
	$$
	Changing variables ($\be =  \mod{f(x)}/\al$) transforms the latter double integral into 
	$$
		\int_{S}  \!\wrt \mu(x) \, \mod{f(x)}^p \int_1^\infty   \, \be^{-p} \, \log({2} 
			+ {\be}\big) \wrt \be
		=  {C_{{p}}}\, \bignormto{f}{p}{p}. 
	$$
	The required estimate follows by combining the last two formulae.  	
\end{proof}

\section{Proof of Theorem~\ref{t: main}~\rmiii} \label{s: The upper half plane iii}

By {Proposition~\ref{p: inclusions T and bNA}} the operators $\cN^{\frb_\infty}$ and $\cN^{\fT_\infty}$ 
are pointwise equivalent.  Since $\cN^{\frb_0}$ is of weak type $(1,1)$, in order to prove Theorem~\ref{t: main}~\rmiii\ 
it suffices to prove the following.

\begin{proposition} \label{p: uncentred infinite rectangle primed}
	{Theorem~\ref{t: main}~(i) does not hold if we replace $\Phi$ 
	by another Young function $\Psi$ on $[0,\infty)$ such that 
	$\ds 
	\lim_{t\to+\infty} \, \frac{\Psi(t)}{\Phi(t)} = 0.
	$}
\end{proposition}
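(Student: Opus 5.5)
We argue by contradiction. By Proposition~\ref{p: inclusions T and bNA} and the weak type $(1,1)$ of $\cN^{\frb_0}$, it suffices to construct, for each large parameter $M$, a function $f_M\ge 0$ and a level $\al$ (we take $\al=1$ after normalising) such that
$$
\bigmod{\{\cN^{\fT_\infty} f_M > c\}} \geq c\, \int_S \Phi(f_M)\wrt\mu
\qquad\hbox{while}\qquad
\int_S \Psi(f_M)\wrt\mu = o\Big(\int_S \Phi(f_M)\wrt\mu\Big)
$$
as $M\to\infty$. Then $\lim_{t\to\infty}\Psi(t)/\Phi(t)=0$ contradicts a distributional inequality with $\Psi$ in place of $\Phi$. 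The natural candidate is $f_M := \e^{\nu M}\One_{B}$, where $B$ is a small fixed ball (say $B=B_1(e)$ or a piece of a trigonon near its vertex). Then $f_M$ takes only the large value $t=\e^{\nu M}$, so $\int\Phi(f_M)\asymp \e^{\nu M}\, \nu M$, while $\int\Psi(f_M)\asymp\Psi(\e^{\nu M})=o(\e^{\nu M}M)$. So the whole task reduces to the lower bound $\bigmod{\{\cN^{\fT_\infty}f_M>c\}}\gtrsim M\,\e^{\nu M}$, which is the geometric heart of the argument: it shows the $M$ ``layers'' of overlap from Proposition~\ref{p: overlapping NA} are actually attained.

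For the lower bound, consider, for each integer $k$ with $1\le k\le M$, the points $x$ whose height is roughly $\e^{-k}$ and which lie in a trigonon $T_R(y)$ of height $R\approx M$ (so $\mod{T_R(y)}\asymp \e^{\nu M}$ by \eqref{f: trigonon more}) containing $B$. The average of $f_M$ over such a trigonon is $\gtrsim \e^{\nu M}\mod{B}/\e^{\nu R}\gtrsim c$ provided $R\le M+O(1)$. So the level set contains the union $U_M$ of all trigona $T_R(y)$ with $R\le M$ containing $B$. I would exploit the freedom of the vertex $y$: the vertex can be placed at height $\e^{j}$ for any $j\in[0,M]$ with $B$ sitting near the bottom of $T_{M}(y)$ (i.e. near height $\e^{j-M}$ after a normalisation putting $B$ at height comparable to $1$, one takes $y$ at height $\e^{j}$ and $R=j$ instead). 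Concretely, for each $j\in\{1,\dots,M\}$ one considers trigona with vertex at height $\e^j$ and height $R=j$ whose bottom contains $B$; by Proposition~\ref{p: shadow on the boundary} and \eqref{eq:cylinder_n0a0} their horizontal projections are $d_N$-balls of radius $\asymp\e^{j/2}$, and the family over all admissible horizontal positions sweeps out a region of measure $\asymp\e^{\nu j}$ whose top slice (heights between $\e^{j-1}$ and $\e^{j}$) is essentially new. This only gives $\sum_j \e^{\nu j}\asymp\e^{\nu M}$ though, which is too weak; so the right normalisation is instead to keep $R=M$ fixed and vary the vertex: trigona $T_M(y)$ with $y$ at height $\e^{M-i}$, $0\le i\le M$, chosen so that $B$ is at relative depth $i$ inside them, and with horizontal positions of $y$ ranging over a $d_N$-ball of radius $\asymp \e^{(M-i)/2}$. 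Each such trigonon has measure $\asymp\e^{\nu M}$ but meets $B$ only in a set of measure comparable to that of $B$, so the average is still $\gtrsim\e^{\nu M}\mod{B}/\e^{\nu M}$ only if $B$ sits in its lower part; I would redo the scaling so that $f_M$ is $\e^{\nu M}$ times the indicator of a unit-measure set and check averages carefully.

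The main obstacle is precisely this counting: showing that the union of the relevant trigona has measure $\gtrsim M\e^{\nu M}$ rather than $\e^{\nu M}$, i.e. that trigona with vertices at $\asymp M$ different heights each contribute a disjoint portion of size $\asymp\e^{\nu M}$ (mirroring the sharpness of Proposition~\ref{p: overlapping NA}). I would first carry this out in $\BH$, where by the explicit description \eqref{f: triangoloid II} trigona are nearly Euclidean-explicit regions and the slices at heights $\e^{-i}$ can be computed by hand, and then transfer to $S$ using the inclusions \eqref{f: ucT ocT} and the dilation structure of $NA$ to compare with the sets $\ucT_R$, $\ocT_R$.
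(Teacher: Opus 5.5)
There is a genuine gap. Your reduction is the paper's: testing $\e^{\nu M}\One_B$ at a fixed level is, after scaling, testing $\One_B$ at level $\al=\e^{-\nu M}$. Everything then reduces to showing $\bigmod{\{\cN^{\fT_\infty}\One_B>\al\}}\gtrsim \al^{-1}\log(1/\al)$. You also eventually reach the right configuration: fixed height parameter $R=M+O(1)$, with vertices at $\asymp M$ different heights above $B$. But the bound $\gtrsim M\e^{\nu M}$ for the union of these trigona, which you call the main obstacle, is never proved. That bound is the entire content of the proposition; what you give is a plan with false starts.

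Two of your worries are red herrings. The average of $\One_B$ over any trigonon $T\supset B$ is $V_B/\mod{T}$ wherever $B$ sits, so there is no constraint that $B$ lie in its lower part. Horizontal translates of the vertex are also not needed.

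Here is the missing argument, which is how the paper closes it.
\begin{enumerate}
\item[\itemno1] Put the vertices on the vertical geodesic, $w_j=(0,0,\e^j)$, and choose $B$ small and inside $T(w_1)$; the choice $B=B_1(e)$ is not justified in general. The inclusion $T(e)\subset T(w_1)$, translated by $w_{j-1}$, gives the nesting $T(w_{j-1})\subset T(w_j)$. Hence $B\subset T_R(w_j)$ for all $1\le j\le R-O(1)$, and with $R=M+O(1)$ each of these trigona lies in the level set.
\item[\itemno2] Disjointness comes from the nesting. The set $D_j:=T_R(w_j)\setminus T_R(w_{j-1})$ is contained in $T(w_j)\setminus T(w_{j-1})$, so the $D_j$ are pairwise disjoint. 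Moreover $D_j=w_{j-1}D_1$, so all the $D_j$ have the same measure.
\item[\itemno3] For $R>3$ one has $\mod{D_1}\gtrsim \e^{\nu R}$. For every height $a\in(\e^{1-R},\e^{2-R})$, the slice $D_1\cap\cH_a$ contains the annulus $\{na: 1\le\cG(n)<\sqrt{\e-1}\}$. By Proposition~\ref{p: shadow on the boundary} and homogeneity of $\cG$, $T(e)\cap\cH_a$ has gauge radius at most $\al_2(a)<1$. On the other hand, $T(w_1)\cap\cH_a$ contains gauge radius $\sqrt{\e}\,\al_1(a/\e)=\sqrt{\e-a}>\sqrt{\e-1}$. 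This is Lemma~\ref{l: gambas}.
\end{enumerate}
The sandwich $\ucT_R\subset T_R\subset\ocT_R$ you mention yields exactly these two radius bounds, so your plan can be completed directly in $S$, without going through $\BH$.

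Summing over $j$ gives $\gtrsim M\e^{\nu M}$, i.e.\ $\gtrsim\al^{-1}\log(1/\al)$. The reason a fixed $R$ with varying vertex height works, while your first attempt ($R=j$) gave only $\e^{\nu M}$, is that the mass of each $D_j$ sits near the bottom of $T_R(w_j)$, at heights $\asymp\e^{j-R}$. Each of these layers has gauge radius $\asymp\e^{j/2}$, hence measure $\asymp\e^{\nu R}$, independent of $j$.
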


{\begin{proof}
	Denote by $B$ a ball in $NA$ centred at $e$ and contained in~$T(w_1)$, where $w_1 := (0,0,\e)$, with volume $\leq 1$.
	Such a ball exists, because $T(w_1)$ is an open set containing $e$.  For notational convenience, we set 
	\begin{equation} \label{f: eta and Eeta}
		\eta
		:= \cN^{\fT_\infty} \One_{B}
		\quad\hbox{and}\quad
		E_\eta(\al)
		:= \big\{\eta > \al \big\}.  
	\end{equation}
	In Lemma~\ref{l: preliminary opt} below we shall prove that there exists a positive constant~$c'$ such that
	\begin{equation} \label{f: main est Eeta}
		\bigmod{E_\eta(\al)}
		\geq c'\, \frac{1}{\al} \,\log\frac{1}{\al}   
	\end{equation}
	for all $\al$ small enough.  

	We proceed by \textit{reductio ad absurdum}.
	If the operator $\cN^{\fb_\infty}$ satisfied a bound like that in Theorem~\ref{t: main}~\rmi, but with 
	$\Psi$ in place of $\Phi$, then there would exist a constant~$C$ such that
	$$
		\bigmod{E_\eta(\al)}
		\leq C \, \int_S \, \Psi\big(\mod{\One_B}/\al\big) \wrt \mu
	$$
	for all $\al$ sufficiently small.  Since $\Psi$ is a Young function, $\Psi(0) = 0$, so that 
	$\Psi\big(\mod{\One_B}/\al\big)$ vanishes off $B$.  Therefore 
	the integral above can be restricted to~$B$ and it is equal to $V_B \, \Psi(1/\al)$, which is at most $\Psi(1/\al)$.  
	By combining this and \eqref{f: main est Eeta}, we obtain 
	$$
		\Phi(1/\al) 
		\asymp \frac{1}{\al} \, \log \frac{1}{\al}
		\leq C \, \Psi\big(1/\al\big)
		\quant \al \, \, \hbox{small},
	$$
	which contradicts the assumption that $\ds \lim_{t\to+\infty} \, \frac{\Psi(t)}{\Phi(t)} = 0$. 
\end{proof}

It remains to prove \eqref{f: main est Eeta}.  Recall that the trigona we consider are open sets.
For each nonnegative integer $j$ consider the point $w_j := (0,0,\e^j)$ in~$S$. 

\begin{lemma} \label{l: gambas}
	Suppose that $a$ is in $(0,1)$.  Then $\big(T(w_1) \setminus T(e)\big)\cap \cH_a$ contains the annulus
	$$
		\big\{na: n \in \cB_{\sqrt{\e-1}} (e_0) \setminus \cB_1(e_0) \big\}. 
	$$
\end{lemma}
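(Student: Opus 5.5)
The plan is to treat the two requirements separately: a point $na$ of the annulus lies in $T(w_1)$, and it does not lie in $T(e)$. Both follow from comparing $T(e)\cap\cH_h$ with gauge balls, using the inclusions \eqref{f: ucT ocT}, the cylinder containment $T_R(e)\subset\cC_R(e)$ for $R>1$, and the behaviour of $\cG$ under the $A$-action.

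\emph{Exclusion from $T(e)$.} Fix $a\in(0,1)$ and choose $R>1$ with $\e^{-R}<a$.
\begin{enumerate}
\item[\itemno1] Any point of $T(e)\cap\cH_a$ lies in $T(e)\cap\Pi_{\e^{-R}}=T_R(e)$.
\item[\itemno2] By the consequence of Proposition~\ref{p: shadow on the boundary} recorded before Definition~\ref{def: cC admissible}, $T_R(e)\subset\cC_R(e)=\cB_1(e_0)\times(\e^{-R},\infty)$.
\item[\itemno3] Hence every $na\in T(e)$ has $\cG(n)<1$.
\end{enumerate}
So no point $na$ with $n\notin\cB_1(e_0)$ belongs to $T(e)$. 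Alternatively one can use $T(e)\cap\cH_a\subset\cB_{\al_2(a)}(e_0)a$ from $\ocT_R(e)$, since $\al_2(a)<1$.

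\emph{Inclusion in $T(w_1)$.} Since $T(w_1)=w_1T(e)$, a point $na$ lies in $T(w_1)$ if and only if $w_1^{-1}na\in T(e)$.
\begin{enumerate}
\item[\itemno1] With $n=(X,Z)$, the group law \eqref{f: group law NA} gives
$$
w_1^{-1}\,(X,Z,a)=\big(\e^{-1/2}X,\ \e^{-1}Z,\ a/\e\big).
$$
\item[\itemno2] This point lies at height $h:=a/\e\in(0,\e^{-1})$. By the homogeneity of $\cG$ under the anisotropic dilations, its $N$-component has gauge $\e^{-1/2}\,\cG(n)$.
\item[\itemno3] If $\cG(n)<\sqrt{\e-1}$, then
$$
\e^{-1/2}\cG(n)<\sqrt{1-\e^{-1}}<\sqrt{1-a/\e}=\al_1(h),
$$
using $a<1$.
\item[\itemno4] Choosing $R$ with $\e^{-R}<h$, the point $w_1^{-1}na$ therefore lies in $\cB_{\al_1(h)}(e_0)h\subset\ucT_R(e)\subset T_R(e)\subset T(e)$, by \eqref{f: ucT ocT}.
\end{enumerate}
Thus $na\in T(w_1)$. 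Combined with the exclusion step, this gives the claimed containment of the annulus.

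The main obstacle is the inclusion $\ucT_R(e)\subset T(e)$, that is, that each horizontal slice $T(e)\cap\cH_h$ contains the full gauge ball $\cB_{\al_1(h)}(e_0)h$. The paper asserts this via the Cayley transform. If it needs justification, I would argue by connectedness. The slice $T(e)\cap\cH_h$ contains the axis point $e_0h$, which lies on the geodesic $\ga_{(0,0,-1)}$. By Proposition~\ref{p: shadow on the boundary}, the relative boundary of the slice in $\cH_h$ lies in $\{\cG\geq\al_1(h)\}$. Hence the connected set $\cB_{\al_1(h)}(e_0)h$ meets the slice and avoids its boundary, so it is contained in the slice.
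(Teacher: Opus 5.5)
Your proof is correct and follows essentially the paper's argument. Both rest on Proposition~\ref{p: shadow on the boundary}, the identity $T(w_1)=w_1T(e)$ and the homogeneity of $\cG$ under the action of $A$, with the same numerical comparisons $\sqrt{\e}\,\al_1(a/\e)=\sqrt{\e-a}>\sqrt{\e-1}$ and $\al_2(a)<1$.

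The only difference is bookkeeping. The paper pushes the boundary shell of $T(e)$ forward by $w_1$ and leaves implicit why the gap between the two shells lies inside $T(w_1)$ and outside $T(e)$. You instead pull the point back by $w_1^{-1}$ and use the inclusions \eqref{f: ucT ocT}, together with your connectedness argument, which makes that inside/outside step explicit.
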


\begin{proof}
	By Proposition~\ref{p: shadow on the boundary}, we know that each point $na$ in 
	$\partial (T(e)) \cap \cH_{a}$ satisfies
	$$
		\al_1(a)
		\leq \cG(n) 
		\leq \al_2(a),
	$$
	where $\al_1$ and $\al_2$ are defined in \eqref{f: al1 al2}.  Furthermore, notice that
	$$
	\partial\big(T(w_1)\big) \cap \cH_{a} 
		= w_1 \cdot\big[\partial\big(T(e)\big) \cap \cH_{a/\e}\big].
	$$
	Using the calculation above and the gauge homogeneity, we see that every $na$ in 
	$\partial(T(w_1)) \cap \cH_{a}$ satisfies
	$$
		\sqrt{\e} \,\, \al_1(a/\e)
		\leq \cG(n) 
		\leq \sqrt{\e}\,\, \al_2(a/\e).
	$$
	Notice that $\sqrt{\e}\,\, \al_1(a/\e) > \al_2(a)$.  Thus, $\big(T(w_1) \setminus T(e)\big)\cap \cH_a$ contains the annulus
	\begin{align*}
		\big\{na: n \in \cB_{\sqrt{\e}\,\, \al_1(a/\e)}(e_0) \setminus \cB_{\al_2(a)}(e_0)\big\}. 
	\end{align*}
	Finally, notice that $\al_2(a) < 1$ and $\sqrt{\e}\,\, \al_1(a/\e)>\sqrt{\e-1}$.  This, together with the previous 
	containment yields the required result.
\end{proof}

We retain the notation established in \eqref{f: eta and Eeta}.

\begin{lemma} \label{l: preliminary opt}
	There exists a positive constant $c'$ such that 
			\begin{equation} \label{f: lower bound}
				\bigmod{E_{\eta}(\al)}
				\geq c'\, \frac{1}{\al} \, \log \frac{1}{\al}
			\end{equation}
	for $\al$ small enough.  
\end{lemma}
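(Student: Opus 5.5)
We need a lower bound $|\{\eta>\al\}|\gtrsim \al^{-1}\log(1/\al)$, where $\eta=\cN^{\fT_\infty}\One_B$ and $B\subset T(w_1)$ is a small ball around $e$. The strategy is to find many points $x$, lying in a set of measure about $\al^{-1}\log(1/\al)$, each contained in some trigonon $T_R(z)$ that contains $B$ and has $|T_R(z)|\lesssim |B|/\al$. For any such $x$ we get $\eta(x)\ge |B|/|T_R(z)|>\al$. By \eqref{f: trigonon more}, $|T_R(z)|\asymp \e^{\nu R}$ when $z$ is at height of order one, so the admissible $R$ satisfy $\e^{\nu R}\le c|B|/\al$, i.e.\ $R\le R_\al:=\nu^{-1}\log(c|B|/\al)$. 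Hence $R_\al\asymp\log(1/\al)$.

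The key step is to use trigona with vertices $w_j=(0,0,\e^j)$ for $j=1,\dots,J$ and to exploit the ``new'' regions $T(w_{j})\setminus T(w_{j-1})$. These regions are disjoint, since the sets $T(w_j)$ are nested in $j$: we have $T(w_{j-1})\subset T(w_j)$, which follows by translation and dilation from $T(e)\subset T(w_1)$ (Lemma~\ref{l: gambas} and its proof contain this). Each $T(w_j)$ contains $T(w_1)\supset B$. For the vertex $w_j$, the trigonon $T_{R}(w_j)$ extends down to height $\e^{j-R}$. To contain $B$ we need $\e^{j-R}$ below the bottom of $B$, which holds if $R\ge j+1$, say. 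Its measure is $\asymp \e^{\nu R}$, by left invariance and \eqref{f: trigonon more}. Choose $R=R_\al$ and let $j$ range over $1\le j\le R_\al-1$, which gives about $\log(1/\al)$ admissible vertices. Then every point of $T_{R_\al}(w_j)$ has $\eta>\al$.

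Next we estimate the measure of the disjoint pieces $D_j:=\big(T(w_j)\setminus T(w_{j-1})\big)\cap \Pi_{\e^{-R_\al+j}}\cap\{h<\e^{j-1}\}$. By Lemma~\ref{l: gambas}, rescaled by the dilation $w_{j-1}$ (which maps $T(e)$, $T(w_1)$ to $T(w_{j-1})$, $T(w_j)$), at every height $a<\e^{j-1}$ this set contains the annulus $\{na:\, n\in \cB_{\sqrt{\e^{j-1}}\sqrt{\e-1}}(e_0)\setminus\cB_{\sqrt{\e^{j-1}}}(e_0)\}$. The $N$-measure of this annulus is $c\,\e^{(j-1)\nu}$, by the homogeneity of $\cG$ under $A$ and the fact that $\cB_r(e_0)$ has measure $\om r^{2\nu}$. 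Integrating against $a^{-\nu-1}\,\wrt a$ over $a\in(\e^{j-R_\al},\e^{j-1})$ gives
$$
|D_j|\ \ge\ c\,\e^{(j-1)\nu}\int_{\e^{j-R_\al}}^{\e^{j-1}}a^{-\nu-1}\wrt a\ \gtrsim\ \e^{\nu R_\al}\asymp \frac1\al,
$$
uniformly in $j$, provided $R_\al-j\ge 2$.

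Summing over the roughly $\log(1/\al)$ disjoint sets $D_j\subset E_\eta(\al)$ yields \eqref{f: lower bound}. The main obstacle is the bookkeeping: we must check that $T_{R}(w_j)$ really contains $B$ and that $D_j\subset T_R(w_j)$, with the heights matching, and that the constant in $R_\al$ leaves room for about $\log(1/\al)$ values of $j$. To make this safe, I would first reduce $R_\al$ by a fixed additive constant, which absorbs all the constants $c_\nu$ and $C_\nu$ of \eqref{f: trigonon more}.
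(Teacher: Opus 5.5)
Your proposal is correct and follows essentially the same route as the paper: you use trigona $T_{R_\al}(w_j)$ with $1\le j\lesssim R_\al\asymp \nu^{-1}\log(1/\al)$, each containing $B$ and of measure $<V_B/\al$, take disjoint pieces from the nested differences $T(w_j)\setminus T(w_{j-1})$, and bound each piece below by $\gtrsim \e^{\nu R_\al}$ via Lemma~\ref{l: gambas} transported by $w_{j-1}$. The differences are cosmetic: you integrate the rescaled annulus over the whole window $(\e^{j-R_\al},\e^{j-1})$ instead of reducing to $j=1$ and using one unit slab, and you drop the paper's $4r_B$ padding since $r_B\le 1$; like the paper, you assume the nesting $T(e)\subset T(w_1)$ without proof, and Lemma~\ref{l: gambas} does not by itself state it.
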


\begin{proof}
Clearly $E_\eta(\al)$ is the union of maximal trigona $T$ satisfying 
$$
	\frac{1}{\mod{T}} \, \int_{T} \, \One_B \wrt \mu
	> \al.
$$
Then 
$$
	\mod{T}
	<    \frac{1}{\al} \, \mod{T\cap B}
	\leq \frac{V_B}{\al}. 
$$
Notice that any trigonon $T'$ containing $B$ and satisfying $\mod{T'} < V_B/\al$ is contained in 
$E_\eta(\al)$.  

Recall that, by \eqref{f: trigonon more}, 
\begin{equation} \label{f: est vol TRz}
	\bigmod{T_R(z)}
	\leq C_\nu\, \e^{\nu R}
\quant z \in S \quant R>1.
\end{equation}
For every $\al$ positive and small enough, set
\begin{equation} \label{f: Ralpha}
R_\al
	:= \frac{1}{\nu} \, \Big\lfloor \log\frac{V_B}{2\e^{4\nu r_B}C_\nu\, \al}\Big\rfloor,
\end{equation}
where $r_B$ denotes the radius of $B$.  
Then \eqref{f: est vol TRz} yields
$$
	\bigmod{T_{R_\al+4r_B}(z)} 
	\leq C_\nu\, \e^{\nu (R_\al+4r_B)}
	\leq C_\nu \, \frac{\e^{4\nu r_B}\, V_B}{2\e^{4\nu r_B} C_\nu\,\al}
	< \frac{V_B}{\al}
	\quant z \in S.
$$
Thus, if we show that for each $j$ in $\{1, \ldots, R_\al-1\}$ the trigonon $T_{R_\al+4r_B}(w_j)$ contains $B$, 
we can conclude that $T_{R_\al+4r_B}(w_j)$ is contained in $E_\eta(\al)$.

For $\al$ small enough we have that $R_{\al}>2$.  Recall that~$B$ is contained in~$T(w_1)$, 
by construction.  Since $T(w_j)$ contains $T(w_1)$ for $j\geq 2$, $B$ is also contained in $T(w_j)$.  

Furthermore, the point in $B$ with lowest height is $\e^{-r_B} \, e$.  Therefore~$B$ is contained in
$\Pi_{\e^{-r_B}}$.  It follows that the trigona $T_{R_\al+4r_B}(w_j)$, $j\in \{1,\ldots,R_\al\}$, contain $B$.

Thus, we have proved that all these trigona are contained in $E_{\eta}(\al)$.  Hence 
$$
E_{\eta}(\al)
\supset T_{R_\al+4r_B}(w_1) \cup \, 
\bigcup_{j=2}^{R_\al-1}\big(T_{R_\al+4r_B}(w_j)\setminus T_{R_\al+4r_B}(w_{j-1}) \big).
$$
We \textit{claim} that  there exists a positive constant $\kappa$, independent of $j$, such that 
       \begin{equation}\label{eq:estFS1 S}
	\bigmod{T_{R_\al+4r_B}(w_j)\setminus T_{R_\al+4r_B}(w_{j-1})}
       \geq \kappa\, \e^{\nu R_\al}.
       \end{equation}

	Given the claim, we see  {from \eqref{eq:estFS1 S} and the inclusion above} that 
	$$
        \bigmod{E_{\eta}(\al)}
		\geq \kappa\, \e^{\nu R_{\al}} \,(R_\al-2)
	$$
	for all $\al$ small enough.  Clearly, there exists a positive number $c'$ such that
	$$
		\kappa\, \e^{\nu R_{\al}} \,(R_\al-2) 
		\geq \frac{\kappa}{2}\, \e^{\nu R_{\al}} \,{R_\al}
                \geq \frac{c'}{\al} \, \log \frac{1}{\al}
	$$
	as required to prove \eqref{f: main est Eeta}.
	
	Finally, we prove the claim \eqref{eq:estFS1 S}.  Notice that 
	$$
	T_{R_\al+4r_B}(w_j)\setminus T_{R_\al+4r_B}(w_{j-1})
	= w_{j-1} \cdot \Big(T_{R_\al+4r_B}(w_1)\setminus T_{R_\al+4r_B}(e)\Big),
	$$
	Since $A$ acts isometrically on $S$, we have
	$$
	\bigmod{T_{R_\al+4r_B}(w_j)\setminus T_{R_\al+4r_B}(w_{j-1})}
	= \bigmod{T_{R_\al+4r_B}(w_1)\setminus T_{R_\al+4r_B}(e)}
	$$
	for every $j$ in $\{1,\ldots,R_\al-1\}$.  Thus, it suffices to prove \eqref{eq:estFS1 S} for $j=1$.

	To prove this, consider the annulus 
	$$
	\cA
	:=  \cB_{\sqrt{\e-1}} (e_0) \setminus \cB_1(e_0).
	$$  
	Observe that 
	$$
	\big(T_R(w_1)\backslash T_R(e)\big)
	= \big(T(w_1)\backslash T(e)\big)\cap \Pi_{\e^{1-R}}.  
	$$ 
	By Lemma~\ref{l: gambas}, this set contains $\cA \times \big(\e^{1-R}, \e^{2-R}\big)$ for $R>3$.  Therefore
	\[
		\begin{aligned}
		|T_R(w_1)\backslash T_R(e)| 
			& \geq \int_{\cA} \wrt n  \, \, 
				\int_{\e^{1-R}}^{\e^{2-R}}\frac{\wrt s}{s^{\nu+1}}  \\
			& = \om\, \big((\e-1)^{\nu/2} - 1\big) \,\, \frac{\e^{-\nu}}{\nu} \,(1-\e^{-\nu}) \,  \e^{\nu R},
		\end{aligned}
	\]
	as required.

	Taking $R$ equal to $R_\al+4r_B$ $(\geq R_\al)$, proves claim \eqref{eq:estFS1 S}.  
\end{proof}
}

\section{A variant of $\cN^{\fb_\infty}$} \label{s: Variants}
In this section we consider a variant of $\cN^{\fb_\infty}$, inspired by \cite[Proposition~5.3]{MS}, 
where a variant of the uncentred triangular maximal operator on trees is considered. 

In order to avoid long computations we present our result only on $\BH$.
For each $z$ in $\mathbb{H}$ and $R\geq 1$, consider the \emph{modified} half ball
\[
b'_R(z)
:= b_R(z)\cup B_1\big(\Re z+i \e^R \Im z\big).
\]
Notice that the point $\Re z+i \e^R \Im z$ lies on the { upward} vertical geodesic {  starting
at} $z$, and 
$$
d\big(z,\Re z+i \e^R \Im z)
= R.
$$
We denote by $\frb_\infty'$ the collection of all modified balls $b_R'(z)$, where $z$ is in $\BH$ and $R$ 
is at least $1$, and denote by $\cN^{\frb_\infty'}$ the associated uncentred maximal operator.

{ 
It is straighforward to check that $\cN^{\fb_\infty'}$ is dominated by $C \, \cN^\fB$, which, by a celebrated result
of Ionescu \cite{I1}, is bounded on $\lp{\BH}$ for all $p>2$.
We show that $\cN^{\fb_\infty'}$ is unbounded on $\lp{\BH}$ if $p$ is in $[1,2]$.  
This illustrates how sensitive maximal operators are to the shape of the sets with respect to which we take averages.
The reason for which $\cN^{\fb_\infty'}$ is unbounded on $\lp{\BH}$ for all $p$ in $[1,2]$ is that 
families of maximal subsets in $\frb_\infty'$ have ``worse overlapping properties'' than the corresponding sets in $\frb_\infty$. }

{ It is straightforward to check that if $R\geq 1$, then $b_R(z)$ and $B_1\big(\Re z+i \e^R \Im z\big)$ are disjoint}, 
and so
\[
	\mod{b'_R(z)}
= \mod{ b_R(z)}+\bigmod{B_1(\Re z+i \e^R \Im z)}
\asymp \e^{R} 
\quant z\in \BH \quant R\geq 1.
\]

{ 
\begin{theorem} \label{t: modified half balls}
	The operator $\cN^{b_\infty'}$ is unbounded on $\lp{\BH}$ for every $p$ in $[1,2]$.
\end{theorem}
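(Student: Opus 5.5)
Since $\cN^{\frb_\infty'}$ is dominated by a constant times $\cN^{\fB}$ on $\ly{\BH}$ and trivially bounded there, boundedness on $\lp{\BH}$ for some $p\le 2$ would give, by Marcinkiewicz interpolation, boundedness for every larger $p$ and in particular restricted weak type $(2,2)$. It therefore suffices to violate restricted weak type $(2,2)$. I will test the operator on $\One_{B}$, where $B := B_1(i)$. Since $\mod{B}\asymp 1$, the goal is to show that
\[
\al^2\,\bigmod{\{\cN^{\frb_\infty'}\One_B>\al\}} \longrightarrow \infty \qquad \text{as } \al\to 0,
\]
or, if only the weaker bound appears, that it is not $O(1)$. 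This is the Ionescu-type strategy, but we exploit the ``balloon'' $B_1(\Re z+i\e^R\Im z)$ attached to each modified half ball.

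The key observation is that $b_R'(z)$ contains $B$ whenever $B_1(\Re z+i\e^R\Im z)$ is close to $B$. If $\Re z=0$ and $\e^R\Im z=1$, then the balloon is exactly $B$. In that case
\[
\frac{1}{\mod{b_R'(z)}}\int_{b_R'(z)} \One_B \wrt\mu \asymp \e^{-R},
\]
so every point of $b_R(z)$ lies in $\{\cN^{\frb_\infty'}\One_B\gtrsim \e^{-R}\}$. To gain more, we also allow $\Re z=x$ with $\mod{x}\lesssim 1$ and $\Im z=\e^{-R}$. The balloon centre $x+i$ is then within distance $O(1)$ of $i$, so $B_2(x+i)$ contains $B$, and we use the enlarged variant, or equivalently shift to $B'=B_{1/2}(i)$, which is harmless since the lower bound only needs the average to be comparable to $\e^{-R}$. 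We then take the union over these $z$ and over all $R$ with $\e^{-R}\ge \al$, say $R\le R_\al:=\log(c/\al)$. For fixed $R$, the half balls $b_R(x+i\e^{-R})$ with $\mod{x}\le 1$ are Euclidean half-discs of size about $\e^{-R}$ located near height between $\e^{-2R}$ and $\e^{-R}$ below the point $x+i\e^{-R}$. Their union $U_R$ is essentially a strip of width $2$ and heights in $(\e^{-2R},\e^{-R})$, so
\[
\mod{U_R}\asymp \int_{\e^{-2R}}^{\e^{-R}} \frac{\wrt y}{y^2}\asymp \e^{2R}.
\]

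On $U_R$ the maximal function exceeds $c\,\e^{-R}$. Choosing $R=R_\al$ gives
\[
\mod{E(\al)} \gtrsim \e^{2R_\al}\asymp \al^{-2}.
\]
This already shows that weak type $(p,p)$ fails for $p<2$, but it only matches the critical order at $p=2$. To break restricted weak type $(2,2)$ we need an extra logarithm. I plan to obtain it by summing over the scales $R\le R_\al$: the sets $U_R$ for different $R$ lie at disjoint height ranges only partially, so instead I will use the uncentred freedom more fully. I allow the $x$-range to grow as well, taking $\mod{x}\lesssim \e^{R_\al-R}$ while still requiring the balloon to reach $B$. This is possible because balloons of hyperbolic radius $1$ centred at $x+i$ only reach $B$ for $\mod{x}\lesssim 1$, so this widening does not obviously work. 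The alternative is to use balloons of centre $x+i\e^{s}$ and vary the height via $\Im z=\e^{s-R}$, which lets us include scales where $B$ meets $b_R$ only through the balloon. Counting these configurations as in Lemma~\ref{l: preliminary opt}, with layers $T_R(w_j)\setminus T_R(w_{j-1})$ replaced by the families indexed by $s$, should produce disjoint pieces of measure about $\e^{2R_\al}$, roughly $R_\al$ of them. That would give $\mod{E(\al)}\gtrsim \al^{-2}\log(1/\al)$, contradicting restricted weak type $(2,2)$.

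The main obstacle is this last step: extracting a divergent factor (logarithmic or better) at $p=2$ from genuinely disjoint regions. The first thing I would try is the Ionescu-type construction of many balloons along a horocycle near $B$, with the lower half balls spreading over disjoint regions. A test function such as $\One_{B}$ replaced by $\One$ of a set $F_N$ spread along a horocycle is the likely robust route, with $F_N$ the union of $N$ separated unit balls, checking that the overlaps of the corresponding half balls are comparable to those of full balls in Ionescu's example.
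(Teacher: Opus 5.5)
There is a genuine gap, and it lies in your reduction rather than in the geometry. You propose to violate restricted weak type $(2,2)$, but that property actually holds for this operator. You note yourself that $\cN^{\frb_\infty'}\le C\,\cN^{\fB}$, and Ionescu's theorem says $\cN^{\fB}$ is of restricted weak type $(2,2)$ on $\BH$. Hence $\bigmod{\{\cN^{\frb_\infty'}\One_B>\al\}}\le C\,\mod{B}\,\al^{-2}$ for every $\al>0$. So the target $\al^2\,\bigmod{E(\al)}\to\infty$ cannot be reached. The extra factor $\log(1/\al)$ obtained by summing over scales, and the $F_N$ construction in your last paragraph, cannot work either. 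The operator is exactly borderline at $p=2$: restricted weak type $(2,2)$ holds and strong type $(2,2)$ fails. No estimate of a single level set can detect this; the failure only shows up after integrating over all levels.

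The missing step is that your intermediate estimate already finishes the proof. You showed $\mod{U_R}\asymp\e^{2R}$ and that $\cN^{\frb_\infty'}\One_{B'}\gtrsim\e^{-R}$ on $U_R$. Hence $\bigmod{E(\al)}\gtrsim\al^{-2}$ for $0<\al<\al_0$, and the layer cake formula gives
\[
\normto{\cN^{\frb_\infty'}\One_{B'}}{p}{p}
= p\int_0^\infty \al^{p-1}\,\bigmod{E(\al)}\wrt\al
\ge c\int_0^{\al_0}\al^{p-3}\wrt\al=\infty
\]
for every $p$ in $[1,2]$, with a logarithmic divergence exactly at $p=2$. Since $\One_{B'}$ lies in $\lp{\BH}$, this proves the theorem.

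The paper's proof is the discrete form of this argument. It takes $R=2^\ell$, so that your sets $U_{2^\ell}$ lie in the pairwise disjoint height ranges $(\e^{-2^{\ell+1}},\e^{-2^\ell})$. These are the paper's $E_\ell$, each a union of $\asymp\e^{2^\ell}$ disjoint half balls $b_{2^\ell}(x+i\e^{-2^\ell})$ with $\mod{x}\le1$. The paper then sums $\sum_\ell\mod{E_\ell}\,\e^{-p2^\ell}\asymp\sum_\ell\e^{(2-p)2^\ell}=\infty$.

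The paper tests on $\One_{\cR}$ with $\cR=B_1(-1+i)\cap B_1(1+i)$, which lies in every satellite $B_1(x+i)$ with $\mod{x}\le1$. Your choice $B'=B_{1/2}(i)$ does the same job only if you restrict to $\mod{x}\le 2\sinh(1/4)$. The ``enlarged variant'' is not an option, since the operator is fixed.
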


\begin{proof}
Consider the hyperbolic balls $B_1(-1+i)$ and $B_1(1+i)$.  They are the Euclidean discs with centres $-1+i\cosh 1$
and $1+i\cosh 1$, and radius $\sinh 1$.  Define
\begin{equation} \label{f: inters satellites}
	\cR
	:= B_1(-1+i) \cap B_1(1+i).   
\end{equation}
An elementary calculation shows that $\cR$ is nonempty.   

On each horocycle $\cH_{\e^{-2^{\ell}}}$, with $\ell \geq 0$, consider a maximal collection of nonintersecting horizontal translates 
of $b_{2^\ell}\bigl(i \e^{-2^\ell}\bigr)$, such that the first and the last of them are 
\[
b_{2^{\ell}}\bigl(-1+ i \e^{-2^\ell}\bigr)
\quad\hbox{and}\quad
b_{2^{\ell}}\bigl(1 + i \e^{-2^\ell}\bigr).
\]
We denote by $n_\ell$ their number. 
Each of these half balls are tangent to the horocycle $\cH_{\e^{-2^{\ell+1}}}$.  Therefore they have empty interesection
with the half balls with centres at height $\e^{-2^{\ell+1}}$.

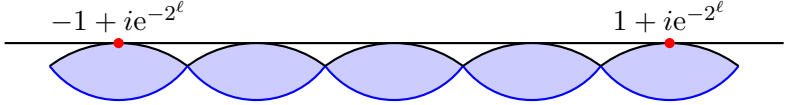
\begin{figure}[ht]
\centering
\begin{tikzpicture}[scale=1.5]

\def\n{5}
\def\R{0.7}

\pgfmathsetmacro{\sinhR}{sinh(\R)}
\pgfmathsetmacro{\coshR}{cosh(\R)}
\pgfmathsetmacro{\tanhR}{tanh(\R)}

\pgfmathsetmacro{\totalwidth}{2*(\n-1)*\tanhR}

\draw[thick] ({-\totalwidth/2 - 1},1) -- ({\totalwidth/2 + 1},1);


\foreach \k in {0,...,\numexpr\n-1} {

    \pgfmathsetmacro{\x}{2*\k*\tanhR - \totalwidth/2}

    \begin{scope}
        \clip (\x,0) circle (1);
        \fill[blue!20] (\x,\coshR) circle (\sinhR);
    \end{scope}

    \begin{scope}
        \clip (\x,0) circle (1);
        \draw[blue, thick] (\x,\coshR) circle (\sinhR);
    \end{scope}

    \begin{scope}
        \clip (\x,\coshR) circle (\sinhR);
        \draw[black, thick] (\x,0) circle (1);
    \end{scope}
}

\pgfmathsetmacro{\leftx}{-\totalwidth/2}
\filldraw[red] (\leftx,1) circle (0.04);
	\node[above] at (\leftx,1) {$-1+i\e^{-2^\ell}$};

\pgfmathsetmacro{\rightx}{\totalwidth/2}
\filldraw[red] (\rightx,1) circle (0.04);
\node[above] at (\rightx,1) {$1+i\e^{-2^\ell}$};

\end{tikzpicture}
\vspace{-60pt}
\caption{A packing of half balls at height $\e^{-2^\ell}$.}
\label{tiling}
\end{figure}

Moreover, \eqref{f: qzRm} and \eqref{f: qzRp} imply that
each half ball with centre at height~$\e^{-2^\ell}$ and radius $2^\ell$ contains a Euclidean segment of Euclidean length 
\[
\rho_{\ell} := 2 \, \e^{-2^\ell} \, \tanh 2^{\ell}.
\]
Clearly
$$
2\, (\tanh1)\, \e^{-2^{\ell}} 
	\leq \rho_{\ell} 
	\leq 2\, \e^{-2^\ell}
	\quant \ell \geq 0.
$$
This and a straightforward calculation yield
\[
n_{\ell} = \left\lfloor \frac{1}{\rho_\ell} \right\rfloor + 1 \asymp \exp(2^{\ell}).
\]

We next consider the $n_{\ell}$ associated modified half balls, 
\begin{equation}\label{eq:half balls ell}
b'_{2^{\ell}}\bigl(-1 + i \e^{-2^\ell}\bigr), \ldots, b'_{2^{\ell}}\bigl(1 + i \e^{-2^\ell}\bigr).
\end{equation}
We observe that the first and last ``satellite'' balls are 
\[
B_1\bigl(-1 + i\bigr)
\quad \text{and} \quad
B_1\bigl(1 + i\bigr),
\]
and that 
$$
\bigcup_{x\in [-1,1]} \, B_1\bigl(x + i\bigr) 
= \cR,
$$
where $\cR$ is defined in \eqref{f: inters satellites}.


For any $\ell\geq0$, we define $E_{\ell}$ as the union of the $n_{\ell}$ half balls associated to the 
modified half balls given in \eqref{eq:half balls ell}. 
Each of these half balls has radius $2^{\ell}$, hence area comparable to $ \exp(2^{\ell})$.  
We conclude that
\begin{equation}\label{eq:vol E ell}
	\bigmod{E_{\ell}} \asymp n_{\ell} \, \exp(2^{\ell})\asymp  \exp(2^{\ell+1}).
\end{equation}

We observe that there exists a positive constant $c$ such that 
$$
\cN^{\frb_\infty'} \mathbbm{1}_\cR (z)
\geq c\, \exp(-2^\ell)
\quant z\in E_\ell \quant \ell \geq 0.
$$
Indeed, let $z$ belong to $E_\ell$.  Then $z$ belongs to some half ball $b$ in $E_\ell$.  The corresponding modified half ball $b'$
contains $\cR$.  Therefore
\[
\cN^{\frb_\infty'} \mathbbm{1}_\cR(z)
\ge \frac{1}{\mod{b'}} \, \int_{b'} \, \mathbbm{1}_\cR  \wrt\mu
= \frac{\mod{\cR}}{\mod{b'}}
\geq c\, \exp(-2^{\ell}).
\]
As a consequence, 
\begin{align*}
\|
	\cN^{\frb_\infty'} \mathbbm{1}_\cR\|_p^p
	& \ge \sum_{\ell=0}^{\infty}\,\, \mod{E_\ell}\, \exp(-p \, 2^{\ell}) \\
& \geq c\,  \sum_{\ell=0}^{\infty} \, \exp\bigl((2-p) \, 2^{\ell}\bigr),
\end{align*}
where for the last inequality we used \eqref{eq:vol E ell}. 
If $1\leq p\leq 2$, then the last sum is infinite, as required.
\end{proof}
}

\end{document}